\documentclass[pdflatex,sn-mathphys-num]{sn-jnl}

\usepackage{graphicx}%
\usepackage{multirow}%
\usepackage{amsmath,amssymb,amsfonts}%
\usepackage{amsthm}%
\usepackage{mathrsfs}%
\usepackage[title]{appendix}%
\usepackage{xcolor}%
\usepackage{textcomp}%
\usepackage{manyfoot}%
\usepackage{booktabs}%
\usepackage{algorithm}%
\usepackage{algorithmicx}%
\usepackage{algpseudocode}%
\usepackage{caption}
\usepackage{bm}
\usepackage{listings}%

\theoremstyle{thmstyleone}%
\newtheorem{theorem}{Theorem}
\newtheorem{lemma}{Lemma}%
\newtheorem{coro}{Corollary}%
\newtheorem{proposition}[theorem]{Proposition}%

\theoremstyle{thmstyletwo}%
\newtheorem{remark}{Remark}%

\theoremstyle{thmstylethree}%
\newtheorem{definition}{Definition}%

\newcommand{\mat}[1]{\ensuremath{\mathsf{#1}}}
\newcommand{\fnc}[1]{\ensuremath{\mathcal{#1}}}

\DeclareMathOperator{\diag}{diag}

\begin{document}

\title[Modal Collocation]{A collocation scheme that is equivalent to discontinuous Galerkin discretizations}


\author*[1]{\fnm{Jason E.} \sur{Hicken}}\email{hickej2@rpi.edu}

\affil*[1]{\orgdiv{Department of Mechanical, Aerospace, and Nuclear Engineering}, \orgname{Rensselaer Polytechnic Institute}, \orgaddress{\street{110 8th Street}, \city{Troy}, \postcode{12180}, \state{New York}, \country{United States of America}}}


\abstract{
A spectral collocation operator with the summation-by-parts property was introduced by Chan to develop entropy-stable discontinuous Galerkin (DG) semi-discretizations (\href{https://doi.org/10.1016/j.jcp.2018.02.033}{DOI:j.jcp.2018.02.033}).  The present work shows that semi-discretizations based on this collocation operator produce solutions that are equivalent to solutions of a DG semi-discretization using the same underlying quadrature.  The equivalence holds regardless of the number of degrees of freedom in the collocation scheme and when the quadrature is not strictly positive.  Extraneous degrees of freedom in the collocation scheme are associated with the nullspace of the operator and remain zero throughout an unsteady simulation.  If necessary, nullspace consistency can be recovered by introducing projection-based numerical dissipation that targets only the extraneous modes.  The equivalence between collocation and DG solutions is verified for the constant-coefficient advection equation and Burgers' equation on triangular meshes. The numerical results show that equivalence breaks down for entropy-stable semi-discretizations of Burgers' equation based on a skew-symmetric splitting, but that equivalence can be recovered by projecting the collocation scheme's residual onto the relevant polynomial space.  In addition to investigating equivalence, the results demonstrate that the collocation operator produces semi-discretizations with favorable spectral radii compared with a commonly used summation-by-parts operator construction.}

\keywords{discontinuous Galerkin, summation-by-parts, spectral element, stability}


\pacs[MSC Classification]{65M06, 65M70, 65M12}

\maketitle

\section{Introduction}\label{sec:intro}

By mimicking integration-by-parts at the discrete level, summation-by-parts (SBP) operators~\cite{Kreiss1974finite} provide a versatile foundation for constructing stable, high-order discretizations.  This foundation has been exploited for many years, but interest in SBP operators grew significantly after 2013, when Fisher and Carpenter \cite{Fisher2013highorder} demonstrated that SBP operators could be used to construct high-order, entropy-stable schemes on bounded domains.

SBP operators were originally developed as finite-difference discretizations~\cite{Kreiss1974finite}, but they have since been generalized to unstructured, element-based grids.  This includes both tensor-product operators~\cite{Gassner2013skew,Carpenter2014entropy} and multidimensional SBP operators~\cite{Hicken2016multidimensional}.  Enabling SBP discretizations for unstructured grids has been a key development in allowing the framework to tackle more complex problems.



Multidimensional SBP operators can be difficult to use, and the goal that originally motivated this work was to identify a straightforward SBP operator with a closed-form expression.  The search for such an operator brought me to the projection-based SBP operator introduced by Chan~\cite{Chan2018discretely} --- see also \cite{Chan2019skew}.  I will refer to Chan's operator as a modal collocation (MC) operator for reasons that will become clear shortly.  Chan did not use the MC operator in a collocation scheme, but as an intermediate step in a finite-element discretization.  To the best of my knowledge, only Montoya and Zingg~\cite{Montoya2024efficient} have used the MC operator in a collocation setting to compare their tensor-product simplex operators with multidimensional SBP operators.

The contribution of this work is to highlight a remarkable property of modal collocation: it produces a solution that is identical to a discontinuous Galerkin (DG) discretization based on the same quadrature.  Thus, regardless of the number of quadrature points, a modal-collocation discretization is effectively a modal DG method.  This is despite the fact --- shown here --- that MC operators are nullspace inconsistent~\cite{Svard2019convergence}, which is typically detrimental for finite difference operators; see, for example, \cite{Glaubitz2026SBPnotenough}.

Here is a brief outline of the remaining paper.  In Section~\ref{sec:methods}, I present the MC operator definition and review some results from~\cite{Chan2018discretely}.  The main theoretical contributions related to MC-DG equivalence are presented in Section~\ref{sec:analysis}.  I verify the theory in Section~\ref{sec:results} by demonstrating the equivalence between a few MC and DG discretizations.  Finally, I provide a summary and discussion of the paper's results in Section~\ref{sec:conclude}

\section{Definition and review of modal-collocation discretizations}\label{sec:methods}

\subsection{Notation and preliminaries}\label{sec:prelim}


Vectors are denoted with bold font, such as the spatial coordinates $\bm{x} = [x_1,x_2,x_3]^T$.  Matrices use sans-serif font, e.g., $\mat{A} \in \mathbb{R}^{N \times M}$ is an $N \times M$ matrix.  Element domains will be represented as $\Omega \subset \mathbb{R}^{D}$, where $D \geq 1$ is the space dimension, and the notation $\Gamma = \partial \Omega$ indicates the boundary of $\Omega$. I will use calligraphic type in order to distinguish functions from point values.  For example, $\fnc{U}(x,t) \in L^2(\Omega)$ is often used to represent the solution to a partial-differential equation, whereas $\bm{u} \in \mathbb{R}^N$ is used to represent its discrete or semi-discrete counterpart.

I will use polynomials to specify the accuracy of various operators, so some notation is needed to describe these functions.  Let $\mathbb{P}^{P}(\Omega)$ denote the space of polynomials of total degree $P$ on the domain $\Omega$.  The dimension of this space is denoted $N_P = \binom{P+D}{D}$. The ordered set $\{ \fnc{V}_i \}_{i=1}^{N_P}$ represents an orthonormal basis for $\mathbb{P}^{P}(\Omega)$; that is, the basis functions are orthonormal with respect to the $L^2$ inner product on $\Omega$:
\begin{equation}\label{eq:ortho}
    \int_{\Omega} \fnc{V}_i \fnc{V}_j \,d\Omega = \delta_{ij},
    \qquad\forall\, i,j = 1, 2, \ldots, N_P,
\end{equation}
where $\delta_{ij}$ is the Kronecker delta.

I will also need some notation for quadrature rules, since numerical integration plays a key role in the definition and analysis of MC operators.  The nodes and weights for an $N$-point quadrature rule over the domain $\Omega$ will be represented as $X_{\Omega} = \{\bm{x}_i\}_{i=1}^{N}$ and $W_{\Omega} = \{ w_{i} \}_{i=1}^{N}$, respectively.  Similarly, I will use $X_{\Gamma} = \{\hat{\bm{x}}_i\}_{i=1}^{M}$ and $W_{\Gamma} = \{ \hat{w}_{i} \}_{i=1}^{M}$ to denote the nodes and weights, respectively, for an $M$-point quadrature rule over $\Gamma$.  Unless stated otherwise, I assume that the quadrature rules are positive-interior rules; for example, this implies that $X_{\Omega} \subset \Omega$, and $w > 0$ for all $w \in W_{\Omega}$.

Some matrices will arise frequently and warrant being defined here.  The matrix $\mat{I}_{N} \in \mathbb{R}^{N\times N}$ denotes the $N\times N$ identity matrix, and the matrix $\mat{0}_{N\times M} \in \mathbb{R}^{N \times M}$ is the $N \times M$ matrix of zeros.  The matrix $\mat{V} \in \mathbb{R}^{N \times N_P}$ is reserved to represent the Vandermonde matrix consisting of the orthonormal basis $\{ \fnc{V}_i \}_{i=1}^{N_P}$ evaluated at the nodes $X_{\Omega}$.  Similarly, the matrix $\mat{V}_{x_d} \in \mathbb{R}^{N \times N_P}$ contains the derivatives of the orthonormal basis, with respect to $x_d$, evaluated at the nodes $X_{\Omega}$.  To be precise, the entries in these matrices are 
\begin{equation}\label{eq:vandermonde}
    [\mat{V}]_{ij} = \fnc{V}_{j}(\bm{x}_i), \qquad 
    \text{and}\qquad 
    [\mat{V}_{x_d}]_{ij} = \frac{\partial \fnc{V}_{j}}{\partial x_d}(\bm{x}_i),
\end{equation}
where $i=1,2,\ldots,N$, and $j=1,2,\ldots,N_P$.

Finally, I review the definition of diagonal-norm, first-derivative multidimensional SBP operators~\cite{Hicken2016multidimensional}, since this definition will be referenced several times in the paper.

\begin{definition}[Diagonal-norm, first-derivative summation-by-parts operator]\label{def:sbp}
The matrix $D_{x_d} \in \mathbb{R}^{N\times N}$ is a degree $P$, diagonal-norm, summation-by-parts operator approximating the derivative operator $\partial/\partial x_d$, $d=1,2,\ldots,D$, at the nodes $X_{\Omega} = \{x_{i}\}_{i=1}^{N}$ if it satisfies the following conditions.
\begin{description}
    \item[Derivative Accuracy:] The matrix differentiates polynomials in $\mathbb{P}^P(\Omega)$ exactly.  Thus,
    \begin{equation}\label{eq:sbp_accuracy}
        \mat{D}_{x_d} \mat{V} = \mat{V}_{x_d}
    \end{equation}
    where $\mat{V}$ and $\mat{V}_{x_d}$ are the matrices defined in~\eqref{eq:vandermonde}.
    \item[Factorization:] The matrix can be factored as $\mat{D}_{x_d} = \mat{W}^{-1} \mat{Q}_{x_d}$, where $\mat{W}$ is a diagonal, positive-definite matrix.
    \item[Boundary Accuracy:] The symmetric matrix $\mat{E}_{x_d} = \mat{Q}_{x_d} + \mat{Q}_{x_d}^{T}$ satisfies
    \begin{equation}\label{eq:sbp_boundary}
    \big[ \mat{V}^T \mat{E}_{x_d} \mat{V} \big]_{ij} = \int_{\Gamma} \fnc{V}_i \fnc{V}_j \, n_{x_d}\, d\Gamma,
    \end{equation}
    where $n_{x_d}$ is the $x_{d}$ component of the outward unit normal vector on $\Gamma$.
\end{description}
\end{definition}

While the SBP definition does not mention quadrature, one can show that the nodes $X_{\Omega}$ and the corresponding diagonal entries in $\mat{W}$ constitute a degree $2P-1$ exact quadrature rule over $\Omega$~\cite{Hicken2013quad,Hicken2016multidimensional}.  Thus, my choice of notation for the nodes, $X_{\Omega}$, in the SBP definition reflects this connection to quadrature.

\subsection{First-derivative modal-collocation operator}

I begin this section by introducing the closed-form expression for the first-derivative operator used in modal-collocation discretizations.  The definition below uses orthonormal basis functions, but it is otherwise identical to the projection-based derivative operator in Equation (24) of \cite{Chan2018discretely}.  I adopt an orthonormal basis to simplify the presentation and analysis, but it is not necessary in practice; see Section~\ref{sec:generalizations}.

\begin{definition}[First-derivative modal-collocation operator]\label{def:mc}
    Let $\{ X_{\Omega}, W_{\Omega}\}$ be a positive-interior quadrature rule that is exact for polynomials of total degree $2P$.  Then the matrix $\mat{D}_{x_d} \in \mathbb{R}^{N\times N}$ is a degree $P$ modal-collocation operator corresponding to the first derivative $\partial/\partial x_{d}$,  $d=1,2,\ldots,D$, at the quadrature nodes $X_{\Omega}$ if it has the form 
    \begin{equation*}
        \mat{D}_{x_d} = \mat{V}_{x_d} \mat{V}^T \mat{W},
    \end{equation*}
    where $\mat{W} = \diag(w_1, w_2, \ldots,w_N)$ is a positive-definite, diagonal matrix whose nonzero entries are the quadrature weights in $W_{\Omega}$. 
\end{definition}

Chan~\cite[Lemma~1]{Chan2018discretely} demonstrated that MC operators are degree $P$ SBP operators.  I restate and prove this result below in the context of orthonormal bases.  While the result is not novel, I include it for completeness and to introduce some definitions needed later.


\begin{theorem}\label{thm:mc_are_sbp}
All degree $P$ first-derivative MC operators are also degree $P$, diagonal-norm SBP operators.
\end{theorem}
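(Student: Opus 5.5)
The plan is to check the three conditions of Definition~\ref{def:sbp} directly. The main tool is one identity that follows from the quadrature being exact for degree $2P$. Products $\fnc{V}_i\fnc{V}_j$ have degree at most $2P$, so the orthonormality condition \eqref{eq:ortho} is reproduced exactly by the quadrature. In matrix form this reads
\begin{equation*}
    \mat{V}^T \mat{W} \mat{V} = \mat{I}_{N_P}.
\end{equation*}
Every later step reduces to this identity together with integration by parts.

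\textbf{Accuracy.} Applying the identity gives $\mat{D}_{x_d}\mat{V} = \mat{V}_{x_d}\mat{V}^T\mat{W}\mat{V} = \mat{V}_{x_d}$, which is exactly \eqref{eq:sbp_accuracy}.

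\textbf{Factorization.} Set $\mat{Q}_{x_d} = \mat{W}\mat{V}_{x_d}\mat{V}^T\mat{W}$. Then $\mat{D}_{x_d} = \mat{W}^{-1}\mat{Q}_{x_d}$. Here $\mat{W}$ is diagonal and positive definite because the rule is assumed to be a positive-interior rule.

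\textbf{Boundary accuracy.} Define $\mat{E}_{x_d} = \mat{Q}_{x_d} + \mat{Q}_{x_d}^T$ and compute
\begin{equation*}
    \mat{V}^T\mat{E}_{x_d}\mat{V} = \mat{V}^T\mat{W}\mat{V}_{x_d}\,(\mat{V}^T\mat{W}\mat{V}) + (\mat{V}^T\mat{W}\mat{V})\,\mat{V}_{x_d}^T\mat{W}\mat{V} = \mat{V}^T\mat{W}\mat{V}_{x_d} + \mat{V}_{x_d}^T\mat{W}\mat{V}.
\end{equation*}
The $(i,j)$ entry of the right-hand side is the quadrature applied to $\fnc{V}_i\,\partial_{x_d}\fnc{V}_j + \partial_{x_d}\fnc{V}_i\,\fnc{V}_j$. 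This integrand is a polynomial of degree at most $2P-1$, so the quadrature evaluates it exactly. The result is therefore $\int_\Omega \partial_{x_d}(\fnc{V}_i\fnc{V}_j)\,d\Omega$. The divergence theorem turns this into $\int_\Gamma \fnc{V}_i\fnc{V}_j\, n_{x_d}\,d\Gamma$, which is \eqref{eq:sbp_boundary}.

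There is no real obstacle here. The only care needed is the degree count that justifies exactness of the quadrature, which is guaranteed by the degree-$2P$ assumption in Definition~\ref{def:mc}. One should also note that $\mat{E}_{x_d}$ is symmetric by construction, although its action is only specified on $\mathrm{range}(\mat{V})$. That is all Definition~\ref{def:sbp} requires.
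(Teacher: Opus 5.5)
Your proposal is correct and follows essentially the same route as the paper. Both arguments use the identity $\mat{V}^T\mat{W}\mat{V} = \mat{I}_{N_P}$ for derivative accuracy, set $\mat{Q}_{x_d} = \mat{W}\mat{V}_{x_d}\mat{V}^T\mat{W}$ for the factorization, and reduce $\mat{V}^T\mat{E}_{x_d}\mat{V}$ to $\mat{V}^T\mat{W}\mat{V}_{x_d} + \mat{V}_{x_d}^T\mat{W}\mat{V}$ before applying quadrature exactness and the divergence theorem (your explicit degree count of $2P-1$ for that integrand is a small clarification the paper leaves implicit).
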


\begin{proof}
The derivative-accuracy condition follows by multiplying the MC operator and the Vandermonde matrix $\mat{V}$:
\begin{equation*}
    \mat{D}_{x_d} \mat{V} = \mat{V}_{x_d} \mat{V}^T \mat{W} \mat{V}
     = \mat{V}_{x_d},
\end{equation*}
where I used the identity $\mat{V}^T \mat{W} \mat{V} = \mat{I}_{N_P}$, which follows from the orthonormality of the basis, c.f.~\eqref{eq:ortho}, and the $2P$ exactness of the quadrature used in the MC operator definition.

The factorization condition in Definition~\ref{def:sbp} is trivially satisfied by defining 
\begin{equation}\label{eq:Q_identity}
    \mat{Q}_{x_d} \equiv \mat{W} \mat{D}_{x_d} = \mat{W} \mat{V}_{x_d} \mat{V}^T \mat{W}.
\end{equation}
This definition of $\mat{Q}_{x_d}$ implies that the symmetric matrix 
\begin{equation}\label{eq:E_identity}
    \mat{E}_{x_d} = \mat{Q}_{x_d} + \mat{Q}_{x_d}^T
    = \mat{W} \mat{V}_{x_d} \mat{V}^T \mat{W} + \mat{W} \mat{V} \mat{V}_{x_d}^T \mat{W},
\end{equation}
which can be used to verify the boundary-accuracy condition, Equation~\eqref{eq:sbp_boundary}, as follows:
\begin{alignat*}{2}
\big[ \mat{V}^T \mat{E}_{x_d} \mat{V} \big]_{ij}
&= \Big[ \mat{V}^T \Big( \mat{W} \mat{V}_{x_d} \mat{V}^T \mat{W} + \mat{W} \mat{V} \mat{V}_{x_d}^T \mat{W} \Big) \mat{V} \Big]_{ij},\qquad& &\text{(using \eqref{eq:E_identity})} \\
&= \big[ \mat{V}^T \mat{W} \mat{V}_{x_d} + \mat{V}_{x_d}^T \mat{W} \mat{V} \big]_{ij}, & &\text{(using $\mat{V}^T \mat{W} \mat{V} = \mat{I}_{N_P}$)}\\
&= \sum_{k=1}^{N} \fnc{V}_i(\bm{x}_k) w_k \frac{\partial \fnc{V}_j}{\partial x_d}(\bm{x}_k) + \sum_{k=1}^{N} \frac{\partial \fnc{V}_i}{\partial x_d} (\bm{x}_k) w_k \fnc{V}_j (\bm{x}_k) && \\
&= \int_{\Omega} \Big( \fnc{V}_i \frac{\partial \fnc{V}_j}{\partial x_d} + \frac{\partial \fnc{V}_i}{\partial x_d} \fnc{V}_{j} \Big) \, d\Omega &&\text{(using quad. accuracy)}\\
&= \int_{\Omega} \fnc{V}_i \fnc{V}_j \, n_{x_d}\, d\Gamma, &&\text{(using div. theorem)}
\end{alignat*}
as required.  Thus, all three conditions are satisfied, which concludes the proof.
\end{proof}


The converse of Theorem~\ref{thm:mc_are_sbp} does not hold.  Most first-derivative SBP operators are not first-derivative MC operators, and I will provide an example in Section~\ref{sec:results}.  However, it is possible to convert an existing SBP operator into an MC operator by projecting it onto the modal basis.  Indeed, as I show below, this conversion is possible for any first-derivative operator that acts at the nodes of a sufficiently high-order quadrature rule.

\begin{proposition}\label{prop:convert}
Let $\mat{D}_{x_d} \in \mathbb{R}^{N \times N}$ be a nodal operator that exactly differentiates degree $P$ polynomials evaluated at the nodes $X_{\Omega}$; thus, $\mat{D}_{x_d} \mat{V} = \mat{V}_{x_d}$.  If the nodes coincide with a degree $2P$ exact quadrature rule, with positive weights $W_{\Omega}$, then $\mat{V} \mat{V}^T \mat{W} \mat{D}_{x_d} \mat{V} \mat{V}^T \mat{W}$ is a degree $P$ modal collocation operator.
\end{proposition}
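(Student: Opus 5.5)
The plan is to compute the projected matrix directly and show that it reduces to the closed form in Definition~\ref{def:mc}. The quadrature hypotheses match that definition: the nodes $X_{\Omega}$ with positive weights $W_{\Omega}$ form a degree $2P$ exact rule. So the only work is to show
\begin{equation*}
\mat{V} \mat{V}^T \mat{W} \mat{D}_{x_d} \mat{V} \mat{V}^T \mat{W} = \mat{V}_{x_d} \mat{V}^T \mat{W}.
\end{equation*}

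First, associate the product so that the accuracy hypothesis applies. Since $\mat{D}_{x_d}\mat{V} = \mat{V}_{x_d}$, the inner product $\mat{D}_{x_d} \mat{V}$ collapses and the expression becomes $\mat{V}\mat{V}^T\mat{W}\mat{V}_{x_d}\mat{V}^T\mat{W}$. Note that the right-hand factor $\mat{V}\mat{V}^T\mat{W}$ is needed for this step: it lets $\mat{D}_{x_d}$ act only on the columns of $\mat{V}$, where its action is known.

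Second, remove the left projection $\mat{V}\mat{V}^T\mat{W}$. The idea is that it is the discrete $L^2$ projection onto $\mathbb{P}^P(\Omega)$, so it acts as the identity on nodal values of degree-$P$ polynomials. The columns of $\mat{V}_{x_d}$ are values of $\partial \fnc{V}_j/\partial x_d \in \mathbb{P}^{P-1}(\Omega) \subset \mathbb{P}^P(\Omega)$. Hence there is a coefficient matrix $\mat{C}\in\mathbb{R}^{N_P\times N_P}$, namely the modal derivative matrix, with $\mat{V}_{x_d} = \mat{V}\mat{C}$. Then
\begin{equation*}
\mat{V}\mat{V}^T\mat{W}\mat{V}_{x_d} = \mat{V}\,(\mat{V}^T\mat{W}\mat{V})\,\mat{C} = \mat{V}\mat{C} = \mat{V}_{x_d},
\end{equation*}
using $\mat{V}^T\mat{W}\mat{V} = \mat{I}_{N_P}$. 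That identity was established in the proof of Theorem~\ref{thm:mc_are_sbp} from the orthonormality~\eqref{eq:ortho} and the $2P$ exactness of the quadrature. Substituting back gives $\mat{V}_{x_d}\mat{V}^T\mat{W}$, which is exactly the MC operator of Definition~\ref{def:mc}.

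The only step needing care is this second one: recognizing that $\mat{V}_{x_d}$ lies in the range of $\mat{V}$. This holds because differentiation maps $\mathbb{P}^P$ into itself and $\{\fnc{V}_i\}$ is a basis, so each derivative expands uniquely in that basis. Everything else is matrix associativity. If desired, one can add that Theorem~\ref{thm:mc_are_sbp} then immediately makes the converted operator a degree $P$ SBP operator as well.
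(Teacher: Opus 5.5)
Your proof is correct and follows the same route as the paper: use the accuracy hypothesis to replace $\mat{D}_{x_d}\mat{V}$ with $\mat{V}_{x_d}$, then apply $\mat{V}\mat{V}^T\mat{W}\mat{V}_{x_d} = \mat{V}_{x_d}$. Your factorization $\mat{V}_{x_d} = \mat{V}\mat{C}$, combined with $\mat{V}^T\mat{W}\mat{V} = \mat{I}_{N_P}$, supplies the justification for that second identity, which the paper uses without comment.
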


\begin{proof}
The result follows immediately from the accuracy of the nodal operator $\mat{D}_{x_d}$, since
\begin{equation*}
    \mat{V}\, \mat{V}^T\, \mat{W}\, \mat{D}_{x_d}\, \mat{V}\, \mat{V}^T \,\mat{W}
    = \mat{V}\, \mat{V}^T\, \mat{W}\, \mat{V}_{x_d}\, \mat{V}^T\, \mat{W}
    = \mat{V}_{x_d} \, \mat{V}^T\, \mat{W},
\end{equation*}
where I used $\mat{V}\, \mat{V}^T\, \mat{W}\, \mat{V}_{x_d} = \mat{V}_{x_d}$ in the second step.
\end{proof}

\begin{remark}
Proposition~\ref{prop:convert} is an interesting observation, but it is not useful in practice: if one can perform the necessary projection $\mat{V}\mat{V}^{T} \mat{W}$, why not use the MC definition \ref{def:mc} directly?
\end{remark}

\subsection{Modal-collocation boundary operator}

There is an equivalent representation of the boundary operator $\mat{E}_{x_d}$ that can be used in practice to evaluate boundary integrals in the weak form.  This is the subject of Lemma~1 from \cite{Chan2018discretely}, which is restated below using the present notation.  However, before restating the lemma, I need the following definition that relates volume and boundary quadrature rules.  Equation~\eqref{eq:compatibility}, appearing in Definition~\ref{def:compatibility}, is called the compatibility condition in the SBP literature~\cite{Fernandez2014generalized}, and it is related to Assumption 1 in \cite{Chan2018discretely}.

\begin{definition}[Compatible volume and boundary quadratures]\label{def:compatibility}
    Let $\{X_\Omega, W_{\Omega}\}$ be a quadrature rule over $\Omega$ and let $\{X_\Gamma, W_{\Gamma}\}$ be a quadrature rule over its boundary $\Gamma$.  We say that the quadrature rules are degree-$P$ compatible if 
    \begin{equation}\label{eq:compatibility}
        \mat{V}^T \mat{W} \mat{V}_{x_d} + \mat{V}_{x_d}^T \mat{W} \mat{V}
        = \mat{V}_{\Gamma}^T\, \mat{W}_{\Gamma}\, \mat{N}_{x_d}\, \mat{V}_{\Gamma},
    \end{equation}
    where $\mat{W}$, $\mat{V}$, and $\mat{V}_{x_d}$ are defined as before, $\mat{W}_{\Gamma}$ is a diagonal matrix whose nonzero entries are the quadrature weights $W_{\Gamma}$, and $\mat{N}_{x_d}$ is a diagonal matrix holding the unit normal $n_{x_d}$ evaluated at the quadrature nodes $X_{\Gamma}$.
\end{definition}

Definition~\ref{def:compatibility} states that compatible volume and boundary quadrature rules must mimic the integration-by-parts formula for degree $P$ polynomials.  Written out in detail, Equation~\eqref{eq:compatibility} reads
\begin{multline*}
    \sum_{k=1}^{N} \fnc{V}_i(\bm{x}_k) w_k \frac{\partial \fnc{V}_j}{\partial x_d}(\bm{x}_k) + \sum_{k=1}^{N} \frac{\partial \fnc{V}_i}{\partial x_d} (\bm{x}_k) w_k \fnc{V}_j (\bm{x}_k) \\
    = \sum_{m=1}^{M} \fnc{V}_i(\hat{\bm{x}}_m) \fnc{V}_j(\hat{\bm{x}}_m) n_{x_d}(\hat{\bm{x}}_m) \hat{w}_m,\qquad \forall i,j = 1,2,\ldots,N_P.
\end{multline*}
The above identity is easy to enforce when the boundary $\Gamma$ consists of piecewise-linear facets, since $n_{x_d}$ is then constant over each sub-boundary.  In this case, one can choose a degree $2P$ exact boundary quadrature for each sub-boundary and a degree $2P-1$ exact volume quadrature.  Therefore, compatibility is straightforward to attain for standard reference elements, such as quadrilaterals, hexahedrons, triangles, and tetrahedrons.  Elements with curvilinear boundaries may require a higher than $2P$-exact boundary quadrature to achieve compatibility. On the other hand, quadratures that are only compatible for constant functions ($P=0$) are often adequate in practice for stable, high-order discretizations; see, for example,~\cite[Sec.~5]{Crean2018entropy}.

I can now restate Lemma 1 from \cite{Chan2018discretely}.


\begin{lemma}[\cite{Chan2018discretely}]\label{thm:mc_boundary}
    Let $\mat{D}_{x_d} = \mat{V}_{x_d} \mat{V}^T \mat{W} = \mat{W}^{-1} \mat{Q}_{x_d}$ be a degree $P$ first-derivative MC operator.  In addition, let $\{ X_{\Gamma}, W_{\Gamma} \}$ be a positive-interior quadrature rule over the boundary $\Gamma$ that is degree-$P$ compatible with the volume quadrature used to define $\mat{D}_{x_d}$.  Then 
    \begin{equation}\label{eq:mc_boundary}
        \mat{E}_{x_d} = \mat{R}_{\Gamma}^{T}\, \mat{W}_{\Gamma}\, \mat{N}_{x_d}\, \mat{R}_{\Gamma},
    \end{equation}
    where $\mat{R}_{\Gamma} = \mat{V}_{\Gamma}\, \mat{V}^T\, \mat{W}$, and $\mat{V}_{\Gamma}$ is the orthonormal basis evaluated at the boundary quadrature nodes $X_{\Gamma}$.
\end{lemma}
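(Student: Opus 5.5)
The plan is a direct algebraic computation: start from the expression for $\mat{E}_{x_d}$ in \eqref{eq:E_identity} and show that it factors as $\mat{W}\mat{V}(\cdot)\mat{V}^T\mat{W}$. Once it is in that form, the compatibility condition \eqref{eq:compatibility} can be substituted into the middle factor. Concretely, I will write
\begin{equation*}
    \mat{E}_{x_d} = \mat{W} \mat{V}_{x_d} \mat{V}^T \mat{W} + \mat{W} \mat{V} \mat{V}_{x_d}^T \mat{W},
\end{equation*}
and aim to prove
\begin{equation*}
    \mat{E}_{x_d} = \mat{W}\mat{V}\big(\mat{V}^T \mat{W} \mat{V}_{x_d} + \mat{V}_{x_d}^T \mat{W} \mat{V}\big)\mat{V}^T \mat{W}.
\end{equation*}
Expanding the right-hand side gives $\mat{W}\mat{V}\mat{V}^T\mat{W}\mat{V}_{x_d}\mat{V}^T\mat{W} + \mat{W}\mat{V}\mat{V}_{x_d}^T\mat{W}\mat{V}\mat{V}^T\mat{W}$. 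So the step reduces to the identities
\begin{equation*}
    \mat{V}\mat{V}^T\mat{W}\mat{V}_{x_d} = \mat{V}_{x_d}
    \qquad\text{and its transpose}\qquad
    \mat{V}_{x_d}^T\mat{W}\mat{V}\mat{V}^T = \mat{V}_{x_d}^T .
\end{equation*}

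The key step is justifying $\mat{V}\mat{V}^T\mat{W}\mat{V}_{x_d} = \mat{V}_{x_d}$. This identity was used without comment in the proof of Proposition~\ref{prop:convert}. I will derive it from the fact that each derivative $\partial \fnc{V}_j/\partial x_d$ lies in $\mathbb{P}^{P-1}\subset\mathbb{P}^P$. Hence there is a coefficient matrix $\mat{C}_{x_d}\in\mathbb{R}^{N_P\times N_P}$ with $\mat{V}_{x_d} = \mat{V}\mat{C}_{x_d}$; it is exactly the modal differentiation matrix, and evaluating at the nodes preserves the relation. Then
\begin{equation*}
    \mat{V}\mat{V}^T\mat{W}\mat{V}_{x_d} = \mat{V}(\mat{V}^T\mat{W}\mat{V})\mat{C}_{x_d} = \mat{V}\mat{C}_{x_d} = \mat{V}_{x_d},
\end{equation*}
where the middle equality uses $\mat{V}^T\mat{W}\mat{V}=\mat{I}_{N_P}$, which holds by orthonormality \eqref{eq:ortho} and the $2P$-exactness of the volume quadrature, as in Theorem~\ref{thm:mc_are_sbp}.

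Finally, I will invoke degree-$P$ compatibility \eqref{eq:compatibility} to replace the bracketed middle factor by $\mat{V}_\Gamma^T\mat{W}_\Gamma\mat{N}_{x_d}\mat{V}_\Gamma$. This yields $(\mat{V}_\Gamma\mat{V}^T\mat{W})^T\mat{W}_\Gamma\mat{N}_{x_d}(\mat{V}_\Gamma\mat{V}^T\mat{W})$, using the symmetry of $\mat{W}$. That is $\mat{R}_\Gamma^T\mat{W}_\Gamma\mat{N}_{x_d}\mat{R}_\Gamma$, as claimed in \eqref{eq:mc_boundary}.

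The only real obstacle is making sure the factorization step is legitimate. The point to check is that $\mat{V}\mat{V}^T\mat{W}$ is not the identity on $\mathbb{R}^N$ when $N>N_P$: it is only a projector onto the range of $\mat{V}$. The argument therefore relies on $\mat{V}_{x_d}$ lying in that range, which the representation $\mat{V}_{x_d}=\mat{V}\mat{C}_{x_d}$ supplies. Positivity of the boundary weights is not needed for this algebra.
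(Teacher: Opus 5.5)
Your proof is correct and complete. The paper does not prove this lemma itself: it restates Lemma~1 of \cite{Chan2018discretely} and defers to that reference. Your argument is essentially Chan's proof specialized to an orthonormal basis. Writing $\mat{V}_{x_d} = \mat{V}\mat{C}_{x_d}$ exhibits $\mat{E}_{x_d}$ as the modal integration-by-parts matrix $\mat{V}^T\mat{W}\mat{V}_{x_d} + \mat{V}_{x_d}^T\mat{W}\mat{V}$ sandwiched between $\mat{W}\mat{V}$ and $\mat{V}^T\mat{W}$. Compatibility then replaces that middle factor by $\mat{V}_\Gamma^T\mat{W}_\Gamma\mat{N}_{x_d}\mat{V}_\Gamma$.

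Your argument is also the boundary-accuracy computation in the proof of Theorem~\ref{thm:mc_are_sbp} run in reverse. Your justification of $\mat{V}\mat{V}^T\mat{W}\mat{V}_{x_d} = \mat{V}_{x_d}$ fills in a step the paper uses without comment in Proposition~\ref{prop:convert}.
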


\section{Analysis of modal-collocation discretizations}\label{sec:analysis}

\subsection{Equivalence of modal-collocation and modal DG}

I will demonstrate the equivalence between MC and modal DG for a scalar, nonlinear hyperbolic equation with a source. Thus, I consider the initial boundary-value problem (IBVP)
\begin{equation}\label{eq:hyperbolic}
\begin{alignedat}{2}
    &\frac{\partial \fnc{U}}{\partial t} = -\sum_{d=1}^{D} \frac{\partial \fnc{F}_d (\fnc{U})}{\partial x_d} + \fnc{S}(\fnc{U},t),&\qquad &\forall\, \bm{x} \in \Omega, t \in [0,T], \\
    &\sum_{d=1}^{D} \fnc{F}_d(\fnc{U}) n_{x_d} = \fnc{F}_n^*(\fnc{U},\fnc{G}), &\qquad&\forall\, \bm{x} \in \Gamma, t \in [0,T], \\
    &\fnc{U}(\bm{x},0) = \fnc{U}_0(\bm{x}),& \qquad &\forall\, \bm{x} \in \Omega.
\end{alignedat}
\end{equation}
The source term, $\fnc{S}(\fnc{U},t) \in L^2(\Omega)$, is nonlinear and time dependent, in general.  Boundary conditions are specified using the flux function $\fnc{F}_n^*(\fnc{U},\fnc{G})$, which selects the boundary data $\fnc{G}(\bm{x},t)$ where the characteristics enter the domain and the solution $\fnc{U}(\bm{x},t)$ otherwise.  The initial condition is specified using $\fnc{U}_0(\bm{x}) \in L^2(\Omega)$.

\paragraph{Modal DG semi-discretization}

For simplicity, I will assume one element is used for the domain $\Omega$.  The modal, discontinuous Galerkin semi-discretization of \eqref{eq:hyperbolic} begins with the weak formulation
\begin{multline}\label{eq:dg_weak}
    \int_{\Omega} \fnc{V}_h^T \, \frac{\partial \fnc{U}_h}{\partial t} \, d\Omega 
    = \sum_{d=1}^{D} \int_{\Omega} \frac{\partial \fnc{V}_h}{\partial x_d} \, \fnc{F}_d(\fnc{U}_h) \, d\Omega \\ - \int_{\Gamma} \fnc{V}_h \, \fnc{F}_n^*(\fnc{U}_h, \fnc{G}) \, d \Gamma + \int_{\Omega} \fnc{V}_h \, \fnc{S} \, d\Omega,
    \qquad \forall\, t \in [0,T],
\end{multline}
where the test functions are elements of the orthonormal basis, $\fnc{V}_h \in \{\fnc{V}_i \}_{i=1}^{N_P}$, and the DG solution is 
\begin{equation*}
\fnc{U}_h(\bm{x},t) = \sum_{i=1}^{N_P} \fnc{V}_{i}(\bm{x}) \tilde{u}_{i}(t).
\end{equation*}
The initial values of the modal coefficients, $\{ \tilde{u}_i(0) \}_{i=1}^{N_P}$, are found through an $L^2$ projection of the initial condition $\fnc{U}_{0}(\bm{x})$ onto the modal basis.  Thus,
\begin{equation}\label{eq:dg_ic_integral}
    \int_{\Omega} \fnc{V}_h \, \fnc{U}_h(\bm{x},0) \, d\Omega = \int_{\Omega} \fnc{V}_h \, \fnc{U}_0(\bm{x}) \, d\Omega, \qquad \forall\, \fnc{V}_h \in \{ \fnc{V}_{i} \}_{i=1}^{N_P}.
\end{equation}

Next, I approximate the integrals in \eqref{eq:dg_weak} and \eqref{eq:dg_ic_integral} using positive-interior quadrature rules that are degree $2P$ exact over $\Omega$ and $\Gamma$.  Substituting the matrices $\mat{V}$, $\mat{V}_{x_d}$, $\mat{V}_\Gamma$, $\mat{W}$, and $\mat{W}_{\Gamma}$ defined previously, and using the identity $\mat{V}^T \mat{W} \mat{V} = \mat{I}_{N_P}$, the DG weak formulation becomes
\begin{equation}\label{eq:dg}
    \frac{d \tilde{\bm{u}}}{dt} = \sum_{d=1}^D \mat{V}_{x_d}^T\, \mat{W} \,\bm{f}_{d}(\mat{V} \tilde{\bm{u}}) - \mat{V}_{\Gamma}^T \,\mat{W}_\Gamma \,\bm{f}^*_n(\mat{V}_{\Gamma} \tilde{\bm{u}}, \bm{g}_{\Gamma}) + \mat{V}^T \,\mat{W} \,\bm{s}(\mat{V}\tilde{\bm{u}},t),\qquad\forall\, t \in [0,T],
\end{equation}
where I have introduced the vector of modal coefficients, $\tilde{\bm{u}} = [\tilde{u}_1, \tilde{u}_2, \ldots, \tilde{u}_{N_P}]^T$, as well as the vector-valued functions $\bm{f}_{d} : \mathbb{R}^{N} \rightarrow \mathbb{R}^{N}$, $\bm{f}_n^* : \mathbb{R}^{M} \times \mathbb{R}^{M} \rightarrow \mathbb{R}^{M}$, and $\bm{s} : \mathbb{R}^{N} \times \mathbb{R} \rightarrow \mathbb{R}$.  These functions are defined entrywise below:
\begin{alignat}{2} 
\big[ \bm{f}_{d}(\mat{V} \tilde{\bm{u}}) \big]_i &= \fnc{F}_{d}\big(\fnc{U}_h(\bm{x}_i,t) \big),& \qquad &\forall\, i = 1,2,\ldots,N, \label{eq:fxd_def}\\
\big[ \bm{f}^*_n(\mat{V}_\Gamma \tilde{\bm{u}},\bm{g}_\Gamma) \big]_i &=
\fnc{F}^*_n\big( \fnc{U}_h(\bm{x}_i,t), \fnc{G}(\bm{x}_i,t) \big), & \qquad &\forall\, i = 1,2,\ldots,M, \label{eq:fstar_def} \\
\big[ \bm{s}(\mat{V} \tilde{\bm{u}},t) \big]_i &= \fnc{S}(\fnc{U}_h(\bm{x}_i),t),& \qquad &\forall\, i = 1,2,\ldots,N, \label{eq:s_def}
\end{alignat}
where $\big[ \bm{g}_\Gamma \big]_i = \fnc{G}(\bm{x}_i,t)$ for all $i=1,2,\ldots,M$.  In addition, the initial condition for the modal coefficients can be expressed as
\begin{equation}\label{eq:dg_ic}
\tilde{\bm{u}}(0) = \mat{V}^T \,\mat{W} \,\bm{u}_0, 
\end{equation}
where $\big[ \bm{u}_0 \big]_i = \fnc{U}_0(\bm{x}_i)$, for all $i = 1,2,\ldots, N$.

\paragraph{Modal collocation semi-discretization}

Following the SBP literature, the strong-form discretization of the IBVP \eqref{eq:hyperbolic} based on the first-derivative MC operator is
\begin{multline}\label{eq:mc}
    \frac{d \bm{u}}{dt} = - \sum_{d=1}^{D} \mat{D}_{x_d} \, \bm{f}_{d}(\bm{u}) -\mat{W}^{-1}\, \mat{R}_{\Gamma}^T \, \mat{W}_{\Gamma} \, \big( \bm{f}_n^*(\mat{R}_\Gamma \bm{u}, \bm{g}_{\Gamma}) - \bm{f}_{n}(\bm{u}) \big) + \mat{V}\, \mat{V}^T\, \mat{W} \, \bm{s}(\bm{u},t), \\ \forall\, t \in [0,T],
\end{multline}
where 
\begin{equation}\label{eq:fn}
\bm{f}_n(\bm{u}) = \sum_{d=1}^D \mat{N}_{x_d}\,\mat{R}_\Gamma\, \bm{f}_{d}(\bm{u}).
\end{equation}
The functions $\bm{f}_{d}$, $\bm{f}_n^*$, and $\bm{s}$ appearing in the MC discretization~\eqref{eq:mc} are the same as the ones defined in Equations~\eqref{eq:fxd_def}--\eqref{eq:s_def} for the DG weak formulation; however, they are nominally evaluated using the MC solution rather than the DG solution.

Notice that the source term in Equation~\eqref{eq:mc} has been projected onto the modal basis.  Similarly, rather than taking $\bm{u}(0) = \bm{u}_0$ as the initial condition, I adopt the projected initial condition 
\begin{equation}\label{eq:mc_ic}
    \bm{u}(0) = \mat{V}\, \mat{V}^T \, \mat{W}\, \bm{u}_0.
\end{equation}

\begin{remark}
Projecting the source term and initial condition onto the polynomial space is not commonly done in SBP discretizations but is necessary to prove MC-DG equivalence.
\end{remark}

The discretization \eqref{eq:mc} can be rewritten in an equivalent ``weak'' form by left multiplying by $\mat{W}$ and using the properties of the MC operator, specifically Equations \eqref{eq:Q_identity}, \eqref{eq:E_identity}, and \eqref{eq:mc_boundary}.  After some algebra I obtain 
\begin{multline}\label{eq:mc_weak}
    \mat{W} \frac{d \bm{u}}{dt} = \sum_{d=1}^D \mat{W} \,\mat{V} \, \mat{V}_{x_d}^T \, \mat{W} \, \bm{f}_{d}(\bm{u}) - \mat{R}_\Gamma^T \, \mat{W}_\Gamma \, \bm{f}_n^*(\mat{R}_\Gamma \bm{u}, \bm{g}_\Gamma) + \mat{W} \, \mat{V} \, \mat{V}^T \, \mat{W} \bm{s}(\bm{u},t),\\
    \forall\, t\in[0,T].
\end{multline}
This form of the MC discretization will be useful in the proof of MC-DG equivalence. 

\paragraph{Main result}

\begin{theorem}[MC-DG equivalence]\label{thm:equivalence}
Assume that the initial-value problem corresponding to the modal DG semi-discretization \eqref{eq:dg_weak} and \eqref{eq:dg_ic} is well posed.  Then the solution to the MC semi-discretization \eqref{eq:mc} and \eqref{eq:mc_ic} is equivalent to the modal DG solution.  That is,
\begin{equation*}
    \bm{u}(t) = \mat{V}\, \tilde{\bm{u}}(t), \qquad \forall\, t \in [0,T].
\end{equation*}
\end{theorem}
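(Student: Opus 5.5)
The plan is to show that $\bm{u}(t) = \mat{V}\tilde{\bm{u}}(t)$ is a solution of the MC initial-value problem \eqref{eq:mc}, \eqref{eq:mc_ic}, and then to invoke uniqueness. Uniqueness for the MC system is not automatic, because $\bm{u}\in\mathbb{R}^N$ has more components than $\tilde{\bm{u}}$. I will therefore also show that every MC solution stays in the range of $\mat{V}$, so that the MC dynamics reduce to the DG dynamics, which are assumed well posed.

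\textbf{Step 1 (initial condition).} By \eqref{eq:dg_ic} and \eqref{eq:mc_ic}, $\bm{u}(0) = \mat{V}\mat{V}^T\mat{W}\bm{u}_0 = \mat{V}\tilde{\bm{u}}(0)$. So the initial data match and lie in $\operatorname{range}(\mat{V})$.

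\textbf{Step 2 (invariance of the range).} Every term on the right-hand side of \eqref{eq:mc} should lie in $\operatorname{range}(\mat{V})$.
\begin{itemize}
\item The derivative term is $\mat{D}_{x_d}\bm{f}_d = \mat{V}_{x_d}\mat{V}^T\mat{W}\bm{f}_d$. Since derivatives of degree-$P$ polynomials are again degree-$P$ polynomials, $\mat{V}_{x_d} = \mat{V}\mat{A}_d$ for some matrix $\mat{A}_d$, so this term lies in the range.
\item The source term is in the range by construction.
\item For the boundary term I use the weak form \eqref{eq:mc_weak}. Multiplying by $\mat{W}^{-1}$ gives
\begin{equation*}
\frac{d\bm{u}}{dt} = \mat{V}\Big[\sum_{d=1}^{D}\mat{V}_{x_d}^T\mat{W}\bm{f}_d(\bm{u}) - \mat{V}_\Gamma^T\mat{W}_\Gamma\bm{f}_n^*(\mat{R}_\Gamma\bm{u},\bm{g}_\Gamma) + \mat{V}^T\mat{W}\bm{s}(\bm{u},t)\Big],
\end{equation*}
using $\mat{W}^{-1}\mat{R}_\Gamma^T = \mat{V}\mat{V}_\Gamma^T$.
\end{itemize}
This single identity shows that $d\bm{u}/dt$ always lies in $\operatorname{range}(\mat{V})$. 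Because the range is invariant and contains $\bm{u}(0)$, I can write $\bm{u}(t) = \mat{V}\bm{c}(t)$ with $\bm{c} = \mat{V}^T\mat{W}\bm{u}$. Here I use that $\mat{V}$ has full column rank, which follows from $\mat{V}^T\mat{W}\mat{V} = \mat{I}_{N_P}$.

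\textbf{Step 3 (reduction to DG).} Left-multiplying the displayed equation by $\mat{V}^T\mat{W}$ and using $\mat{V}^T\mat{W}\mat{V} = \mat{I}_{N_P}$ gives an ODE for $\bm{c}$. With $\bm{u} = \mat{V}\bm{c}$, its arguments simplify as $\bm{f}_d(\mat{V}\bm{c})$ and $\mat{R}_\Gamma\mat{V}\bm{c} = \mat{V}_\Gamma\bm{c}$. The resulting ODE is exactly \eqref{eq:dg}, and $\bm{c}(0) = \tilde{\bm{u}}(0)$. Well-posedness of the DG problem then gives $\bm{c} = \tilde{\bm{u}}$, hence $\bm{u} = \mat{V}\tilde{\bm{u}}$.

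\textbf{Main obstacle.} The main obstacle is the rigor of the weak-form rewrite \eqref{eq:mc_weak}, which I take from the paper. It relies on Lemma~\ref{thm:mc_boundary}, so on compatibility, to cancel the $\bm{f}_n(\bm{u})$ term: $\mat{E}_{x_d}\bm{f}_d = \mat{R}_\Gamma^T\mat{W}_\Gamma\mat{N}_{x_d}\mat{R}_\Gamma\bm{f}_d$ matches the subtracted term. A second concern is making the uniqueness argument airtight. It suffices that any MC solution remains in the range, which Step 2 shows directly, since the projected component evolves by \eqref{eq:dg} and the complementary component has zero derivative and zero initial value.
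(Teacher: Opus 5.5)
Your proof is correct and is essentially the paper's argument. The paper writes $\bm{u} = \mat{V}\bm{u}_V + \mat{Z}\bm{u}_Z$, with $\mat{Z}$ a $\mat{W}$-orthonormal basis for the nullspace of $\mat{V}^T\mat{W}$, and shows $d\bm{u}_Z/dt=\bm{0}$ with $\bm{u}_Z(0)=\bm{0}$; this is your invariance of $\operatorname{range}(\mat{V})$, i.e.\ $(\mat{I}_N-\mat{V}\mat{V}^T\mat{W})\bm{u}\equiv\bm{0}$, and the paper then left-multiplies \eqref{eq:mc_weak} by $\mat{V}^T$ to recover \eqref{eq:dg}, exactly as in your Step~3. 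Working with the projector instead of an explicit $\mat{Z}$ is a small plus: it uses only $\mat{V}^T\mat{W}\mat{V}=\mat{I}_{N_P}$ and invertibility of $\mat{W}$, so it also covers the negative-weight case the paper mentions, where a $\mat{Z}$ with $\mat{Z}^T\mat{W}\mat{Z}=\mat{I}_{N_Z}$ need not exist because $\mat{W}$ can be indefinite on that nullspace.
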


\begin{proof}
Let $\mat{Z} \in \mathbb{R}^{N \times N_Z}$, where $N_Z = N -  N_P$, be a matrix in the nullspace of $\mat{V}^T \, \mat{W}$ and orthonormal with respect to $\mat{W}$; that is,
\begin{equation}\label{eq:Z_def}
    \mat{V}^T \, \mat{W} \, \mat{Z} = \mat{0}_{N_P \times N_Z},
    \qquad\text{and}\qquad
    \mat{Z}^T\, \mat{W} \, \mat{Z} = \mat{I}_{N_Z}.
\end{equation}
Based on the rank-nullity theorem, the MC solution can be decomposed using the modal and nullspace bases as follows:
\begin{equation*}
    \bm{u}(t) = \mat{V} \, \bm{u}_{V}(t) + \mat{Z}\, \bm{u}_Z(t),
\end{equation*}
where $\bm{u}_{V}(t) \in \mathbb{R}^{N_P}$ and $\bm{u}_Z(t) \in \mathbb{R}^{N_Z}$.  The proof proceeds by showing that $\bm{u}_Z(t) = \bm{0}$ and $\bm{u}_V(t) = \tilde{\bm{u}}(t)$.

To show that $\bm{u}_Z(t) = \bm{0}$, left multiply the weak form of the MC discretization, Equation \eqref{eq:mc_weak}, by $\mat{Z}^{T}$.  On the left-hand side this yields (see \eqref{eq:Z_def})
\begin{equation*}
\mat{Z}^{T}\, \mat{W} \frac{d}{dt} \big(\mat{V}\, \bm{u}_{V} + \mat{Z}\, \bm{u}_Z \big) = \frac{d \bm{u}_Z}{dt},
\end{equation*}
while on the right-hand side we obtain 
\begin{equation*}
\sum_{d=1}^D \mat{Z}^T \, \mat{W}\, \mat{V}\, \mat{V}_{x_d}^T\, \mat{W}\, \bm{f}_{d}(\bm{u}) - \mat{Z}^T\, \mat{R}_\Gamma^T\, \mat{W}_\Gamma\, \bm{f}_n^*(\mat{R}_\Gamma \bm{u}, \bm{g}_\Gamma) + \mat{Z}^T\, \mat{W}\, \mat{V}\, \mat{V}^T\, \mat{W}\, \bm{s}(\bm{u},t) = \bm{0},
\end{equation*}
since, again, $\mat{Z}$ is orthogonal to $\mat{V}$ with respect to $\mat{W}$; recall from Lemma~\ref{thm:mc_boundary} that $\mat{R}_\Gamma = \mat{V}_\Gamma \, \mat{V}^T \,\mat{W}$ so $\mat{Z}^T\,\mat{R}_\Gamma^T = \mat{0}_{N_Z \times N_M}$.  Furthermore, since $\bm{u}(0) = \mat{V}\, \bm{u}_V(0) + \mat{Z}\, \bm{u}_Z(0) = \mat{V} \, \mat{V}^T \,\mat{W} \,\bm{u}_0$ from \eqref{eq:mc_ic}, it follows that the vector $\bm{u}_{Z}$ is governed by the initial value problem 
\begin{equation*}
    \frac{d \bm{u}_Z}{dt} = \bm{0}, \qquad \forall\, t \in [0,T], 
    \qquad\text{with IC}\qquad \bm{u}_Z(0) = \bm{0}.
\end{equation*}
Thus, $\bm{u}_Z(t) = \bm{0}$ for all time $t \in [0,T]$.  Consequently, the MC solution must be of the form $\bm{u}(t) = \mat{V} \bm{u}_{V}(t)$.

Next, I left multiply Equation \eqref{eq:mc_weak} by $\mat{V}^{T}$ and substitute $\bm{u}(t) = \mat{V} \, \bm{u}_{V}(t)$:
\begin{equation}\label{eq:uv}
\frac{d \bm{u}_V}{dt} = \sum_{d=1}^D \mat{V}_{x_d}^T \, \mat{W} \, \bm{f}_{d}(\mat{V} \bm{u}_{V}) - \mat{V}_\Gamma^T\, \mat{W}_\Gamma \, \bm{f}_n^*(\mat{V}_\Gamma \bm{u}_{V}, \bm{g}) + \mat{V}^T \, \mat{W}\, \bm{s}(\mat{V} \bm{u}_{V}, t),\qquad \forall\, t \in [0,T],
\end{equation}
where I used $\mat{R}_\Gamma\, \mat{V}\,\bm{u}_{V} = \mat{V}_\Gamma \, \bm{u}_V$.  The initial condition for $\bm{u}_V$ is found by left multiplying \eqref{eq:mc_ic} by $\mat{V}^T \, \mat{W}$:
\begin{equation}\label{eq:uv_ic}
    \bm{u}_V(0) = \mat{V}^T\, \mat{W} \, \mat{V} \, \mat{V}^T \, \mat{W} \, \bm{u}_0 = \mat{V}^T \, \mat{W} \, \bm{u}_0.
\end{equation}
Comparing Equations~\eqref{eq:uv} and \eqref{eq:uv_ic} with Equations \eqref{eq:dg} and \eqref{eq:dg_ic}, respectively, we see that $\tilde{\bm{u}}$ and $\bm{u}_V$ are governed by identical initial value problems.  The modal DG semi-discretization is assumed to be well-posed, so its solution exists and is unique, which completes the proof.
\end{proof}


\begin{remark}
The proof can easily be generalized to multi-element meshes, provided the MC discretization uses the appropriate $\mat{R}_\Gamma$ matrices to project the solution to the face quadrature nodes.
\end{remark}

\subsection{Spectral equivalence of modal-collocation and modal DG}


Suppose the PDE in the IBVP \eqref{eq:hyperbolic} is linear.  Then the modal DG semi-discretization \eqref{eq:dg} can be written in the form 
\begin{equation}\label{eq:dg_linear}
\frac{d \tilde{\bm{u}}}{dt} = \tilde{\mat{A}} \tilde{\bm{u}} + \tilde{\bm{b}}(t),
\qquad\forall\, t \in [0,T],
\end{equation}
and the MC semi-discretization can be written as 
\begin{equation}\label{eq:mc_linear}
\frac{d \bm{u}}{dt} = \mat{A} \bm{u} + \bm{b}(t),
\qquad\forall\, t \in [0,T].
\end{equation}
One of the consequences of Theorem~\ref{thm:equivalence} is that spectra of the matrices $\tilde{\mat{A}}$ and $\mat{A}$ in \eqref{eq:dg_linear} and \eqref{eq:mc_linear} are effectively equivalent.

\begin{coro}\label{cor:spectrum}
The spectrum of the matrix $\mat{A}$ in the MC semi-discretization \eqref{eq:mc_linear} can be partitioned into two sets. The first set consists of the $N_P$ eigenvalues corresponding to the matrix $\tilde{\mat{A}}$ in the DG semi-discretization \eqref{eq:dg_linear}.  The second set consists of $N_Z = N - N_P$ repeated zero eigenvalues.
\end{coro}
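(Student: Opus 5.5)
I will work in the basis adapted to the proof of Theorem~\ref{thm:equivalence}: form the square matrix $\mat{T} = [\,\mat{V} \;\; \mat{Z}\,] \in \mathbb{R}^{N\times N}$ with $\mat{Z}$ as in \eqref{eq:Z_def}. Because $\mat{V}^T\mat{W}\mat{V} = \mat{I}_{N_P}$, $\mat{Z}^T\mat{W}\mat{Z} = \mat{I}_{N_Z}$ and $\mat{V}^T\mat{W}\mat{Z} = \mat{0}$, we have $\mat{T}^T\mat{W}\mat{T} = \mat{I}_N$. Hence $\mat{T}$ is invertible with $\mat{T}^{-1} = \mat{T}^T\mat{W}$, and $\mat{A}$ is similar to $\mat{T}^{-1}\mat{A}\mat{T}$. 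The plan is to show that this similarity transform is block upper triangular (in fact block diagonal) with diagonal blocks $\tilde{\mat{A}}$ and $\mat{0}_{N_Z\times N_Z}$. The claimed partition of the spectrum then follows immediately.

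To identify $\mat{A}$, I will use that the PDE is linear, so $\bm{f}_d(\bm{u}) = \mat{F}_d\bm{u}$, $\bm{s}(\bm{u},t) = \mat{S}\bm{u} + \bm{s}_0(t)$, and $\bm{f}_n^*(\mat{R}_\Gamma\bm{u},\bm{g}_\Gamma) = \mat{B}\mat{R}_\Gamma\bm{u} + \bm{c}(t)$. Here $\mat{F}_d$ and $\mat{S}$ are diagonal, since the fluxes and source are pointwise evaluations. From the weak form \eqref{eq:mc_weak},
\begin{equation*}
\mat{A} = \mat{W}^{-1}\Big( \sum_{d}\mat{W}\mat{V}\mat{V}_{x_d}^T\mat{W}\mat{F}_d - \mat{R}_\Gamma^T\mat{W}_\Gamma\mat{B}\mat{R}_\Gamma + \mat{W}\mat{V}\mat{V}^T\mat{W}\mat{S}\Big).
\end{equation*}
Every term of $\mat{W}\mat{A}$ has a left factor $\mat{W}\mat{V}$ (recall $\mat{R}_\Gamma^T = \mat{W}\mat{V}\mat{V}_\Gamma^T$). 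Therefore $\mat{Z}^T\mat{W}\mat{A} = \mat{0}$, which kills the entire bottom block row of $\mat{T}^{-1}\mat{A}\mat{T}$. This gives a block upper-triangular matrix whose $(2,2)$ block is zero.

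For the top-left block I will compute $\mat{V}^T\mat{W}\mat{A}\mat{V}$. Using $\mat{V}^T\mat{W}\mat{V} = \mat{I}$ and $\mat{R}_\Gamma\mat{V} = \mat{V}_\Gamma$, it reduces to $\sum_d \mat{V}_{x_d}^T\mat{W}\mat{F}_d\mat{V} - \mat{V}_\Gamma^T\mat{W}_\Gamma\mat{B}\mat{V}_\Gamma + \mat{V}^T\mat{W}\mat{S}\mat{V}$. This is exactly $\tilde{\mat{A}}$, read off from \eqref{eq:dg}; it is the same computation that produced \eqref{eq:uv}. The characteristic polynomial of $\mat{A}$ then factors as $\det(\lambda\mat{I}_{N_P} - \tilde{\mat{A}})\,\lambda^{N_Z}$, which is the corollary.

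The main subtlety is that the top-right block $\mat{V}^T\mat{W}\mat{A}\mat{Z}$ need not vanish, because $\mat{F}_d\mat{Z}$ can have components along $\mat{V}$. I will therefore rely only on block triangularity, not on block diagonality, so this block never needs to be evaluated. A related consequence is that the zero eigenvalues may be defective, and I would remark that the statement concerns eigenvalues only, not a full eigenbasis. I would also note that the argument needs only $\mat{W}$ to be invertible, not positive, so it is consistent with the paper's claim about non-positive quadratures.
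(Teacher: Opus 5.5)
Your proof is correct. It uses the same change of basis as the paper, $\mat{T}=[\mat{V},\,\mat{Z}]$ with $\mat{T}^{-1}$ built from $\mat{V}^T\mat{W}$ and $\mat{Z}^T\mat{W}$, but you identify the blocks of $\mat{T}^{-1}\mat{A}\mat{T}$ differently. The paper reads off the whole matrix by comparing with the decoupled system formed by the DG equations and $d\bm{u}_Z/dt=\bm{0}$, and sets it equal to $\diag(\tilde{\mat{A}},\mat{0})$. That comparison legitimately gives the bottom block row, because $\mat{Z}^T\mat{W}\mat{A}=\mat{0}$ holds for every $\bm{u}$, and it gives the top-left block. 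However, it implicitly asserts $\mat{V}^T\mat{W}\mat{A}\mat{Z}=\mat{0}$, which Theorem~\ref{thm:equivalence} does not supply, since \eqref{eq:uv} was derived with $\bm{u}_Z=\bm{0}$ already substituted. You compute $\mat{Z}^T\mat{W}\mat{A}=\mat{0}$ and $\mat{V}^T\mat{W}\mat{A}\mat{V}=\tilde{\mat{A}}$ directly from \eqref{eq:mc_weak} and then use only block triangularity, so you never need that block. Your argument therefore also covers variable-coefficient fluxes and sources (diagonal $\mat{F}_d$, $\mat{S}$ that are not multiples of the identity), where the block is generically nonzero.

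In the paper's literal setting, $\fnc{F}_d$ and $\fnc{S}$ depend on $\bm{x}$ only through $\fnc{U}$, so linearity makes $\mat{F}_d$ and $\mat{S}$ scalar multiples of $\mat{I}_N$. There the block does vanish, because the columns of $\mat{V}_{x_d}$ are nodal values of degree $P-1$ polynomials and lie in the range of $\mat{V}$. The paper's block-diagonal claim is thus true in that setting, but your argument is the one that actually establishes it without that restriction. On defectiveness: if $\tilde{\mat{A}}$ is nonsingular, the zero eigenvalue is semisimple. Indeed, $(-\tilde{\mat{A}}^{-1}\mat{K}\bm{z},\,\bm{z})$ with $\mat{K}=\mat{V}^T\mat{W}\mat{A}\mat{Z}$ lies in the null space for every $\bm{z}\in\mathbb{R}^{N_Z}$, so a Jordan block can only arise when $\tilde{\mat{A}}$ is itself singular.

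Your closing remark on nonpositive weights needs one correction: if $\mat{W}$ has negative entries, the normalization $\mat{Z}^T\mat{W}\mat{Z}=\mat{I}_{N_Z}$ is impossible. $\mathbb{R}^N$ is the $\mat{W}$-orthogonal direct sum of the range of $\mat{V}$ and $\ker(\mat{V}^T\mat{W})$, and $\mat{V}^T\mat{W}\mat{V}=\mat{I}_{N_P}$. Sylvester's law of inertia then places all of the negative inertia of $\mat{W}$ on $\ker(\mat{V}^T\mat{W})$; the paper's own $\mat{Z}$ has the same issue for the negative-weight rule in Section~\ref{sec:results}. The fix is easy: take $\mat{Z}$ to be any basis of $\ker(\mat{V}^T\mat{W})$. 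Then $\mat{G}=\mat{Z}^T\mat{W}\mat{Z}$ is nonsingular whenever $\mat{W}$ is, and $\mat{T}^{-1}$ has block rows $\mat{V}^T\mat{W}$ and $\mat{G}^{-1}\mat{Z}^T\mat{W}$. The rest of your argument goes through unchanged.
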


\begin{proof}
Substitute the decomposition $\bm{u}(t) = \mat{V} \tilde{\bm{u}}(t) + \mat{Z} \bm{u}_Z(t)$ from the proof of Theorem~\ref{thm:equivalence} into \eqref{eq:mc_linear}:
\begin{equation*}
   \begin{bmatrix} \mat{V},\, \mat{Z} \end{bmatrix} \frac{d}{dt} \begin{bmatrix} \tilde{\bm{u}} \\ \bm{u}_Z \end{bmatrix}
    = \mat{A} \begin{bmatrix} \mat{V},\, \mat{Z} \end{bmatrix} \begin{bmatrix} \tilde{\bm{u}} \\ \bm{u}_Z \end{bmatrix}.
\end{equation*}
The inverse of the $N \times N$ matrix on the left-hand side is
\begin{equation*}
    \begin{bmatrix} \mat{V},\, \mat{Z} \end{bmatrix}^{-1}
    = \begin{bmatrix} \mat{V}^T\, \mat{W} \\ \mat{Z}^T\, \mat{W} \end{bmatrix}.
\end{equation*}
Left multiplying by this inverse, the ODE above becomes
\begin{align}
\frac{d}{dt} \begin{bmatrix} \tilde{\bm{u}} \\ \bm{u}_Z \end{bmatrix}
    &= \begin{bmatrix} \mat{V},\, \mat{Z} \end{bmatrix}^{-1} \mat{A} \begin{bmatrix} \mat{V},\, \mat{Z} \end{bmatrix} \begin{bmatrix} \tilde{\bm{u}} \\ \bm{u}_Z \end{bmatrix} \notag \\
    &= \underbrace{\begin{bmatrix} \mat{V}^T \,\mat{W}\, \mat{A}\, \mat{V} \quad & \mat{V}^T\, \mat{W}\, \mat{A}\, \mat{Z} \\[1.5ex] \mat{Z}^T\, \mat{W}\, \mat{A}\, \mat{V}\quad & \mat{Z}^T \,\mat{W}\, \mat{A}\, \mat{Z} \end{bmatrix}}_{\displaystyle \equiv \mat{B} } \begin{bmatrix} \tilde{\bm{u}} \\ \bm{u}_Z \end{bmatrix}. \label{eq:B}
\end{align}
Note that the matrix $\mat{B}$, defined above, is similar to $\mat{A}$, so the two matrices have the same eigenvalues, i.e.~$\sigma(\mat{A}) = \sigma(\mat{B})$.

Now, from the DG semi-discretization \eqref{eq:dg_linear} and the trivial ODE $d \bm{u}_Z/dt = \bm{0}$ --- the latter follows from Theorem~\ref{thm:equivalence} --- we also have 
\begin{equation*}
\frac{d}{dt} \begin{bmatrix} \tilde{\bm{u}} \\ \bm{u}_Z \end{bmatrix}
    = \begin{bmatrix} \tilde{\mat{A}} & \mat{0}_{N_P \times N_Z} \\[1.5ex] \mat{0}_{N_Z \times N_P} & \mat{0}_{N_Z \times N_Z} \end{bmatrix} \begin{bmatrix} \tilde{\bm{u}} \\ \bm{u}_Z \end{bmatrix}.
\end{equation*}
Equating the matrix on the right-hand side with the matrix $\mat{B}$ in \eqref{eq:B}, we can conclude that $\sigma(\mat{A}) = \sigma(\mat{B}) = \sigma(\tilde{\mat{A}}) \cup \{ 0 \}_{i=1}^{N_Z}$, as desired.
\end{proof}

Corollary~\ref{cor:spectrum} implies that MC solutions are effectively independent of the choice of quadrature.  In particular, the quadrature node distribution should have no impact on the spectral radius; I will illustrate this in Section~\ref{sec:results}.  The caveat suggested by the word ``effectively'' is that the projection is only approximate for functions that are not in $\mathbb{P}^{P}(\Omega)$, so the spectrum will differ between quadrature rules, in general; however, these differences will be on the order of the discretization.

\subsection{Nullspace inconsistency and steady problems}

A first-derivative operator $\mat{D}_{x_d}$ is \emph{nullspace consistent}~\cite{Svard2019convergence} if its nullspace consists only of constant vectors; that is, $\mat{D}_{x_d} \bm{u} = \bm{0}$ implies that $\bm{u}$ is a constant vector.  The operator is consistent in the sense that the differential operator $\partial/\partial_{x_d}$ also has only constant functions in its nullspace.

MC operators are \emph{nullspace inconsistent} whenever $N > N_P$.  This is easy to see using the matrix $\mat{Z} \in \mathbb{R}^{N \times N_Z}$, which was defined in the proof of Theorem~\ref{thm:mc_boundary} as the nullspace of $\mat{V}^T\, \mat{W}$.  It follows immediately that $\mat{D}_{x_d}\, \mat{Z} = \mat{V}_{x_d} \, \mat{V}^T \, \mat{W} \, \mat{Z} = \mat{0}_{N \times N_Z}$.

In the context of finite-difference discretizations, nullspace inconsistency can lead to inaccurate solutions~\cite{Svard2019convergence,Glaubitz2026SBPnotenough}.  However, the nullspace inconsistency of MC operators does not appear to present a problem, because the nullspace modes are initialized to zero and never grow; recall $d \bm{u}_Z /dt = \bm{0}$ in the proof of Theorem~\ref{thm:equivalence}.

\begin{remark}
The projected initial condition \eqref{eq:mc_ic} is responsible for setting the initial nullspace modes to zero.  However, projecting the initial condition is unnecessary for smooth problems, because the nullspace modes would correspond to $\text{O}(h^{P+1})$ errors that, while not damped, never grow.
\end{remark}

While not an issue for unsteady problems, the nullspace inconsistency of MC operators is problematic for steady problems, because Theorem~\ref{thm:equivalence} implies that the Jacobian of the right-hand side of \eqref{eq:mc_weak} will be singular.  Similarly, Corollary \ref{cor:spectrum} indicates that the system matrix for linear problems has zero eigenvalues and is therefore singular.

Fortunately, there is a simple solution to the nullspace inconsistency of MC discretization: local-projection stabilization (LPS).  LPS was first proposed in \cite{Becker2001finite} and adapted to multidimensional SBP discretizations in \cite{Hicken2020entropy}.

\begin{definition}[Local-projection stablization operator]\label{def:lps}
    Let $\{ X_{\Omega}, W_{\Omega}\}$ be a positive-interior quadrature rule that is exact for polynomials of total degree $2P$ on the domain $\Omega$.  Then the matrix $\mat{P} \in \mathbb{R}^{N\times N}$ is a degree $P$ local-projection stabilization operator if 
    \begin{align*}
        \mat{P} &= \big( \mat{I}_{N} - \mat{V} \mat{V}^T \mat{W} \big)^T \mat{W} \big( \mat{I}_{N} - \mat{V} \mat{V}^T \mat{W} \big) \\
        &= \mat{W} \big( \mat{I}_{N} - \mat{V} \mat{V}^T \mat{W} \big),
    \end{align*}
    where the second line follows from the orthonormality of $\mat{V}$ with respect to $\mat{W}$.
\end{definition}

To remove the nullspace from the MC semi-discretizations, one subtracts $\mat{P} \bm{u}$ from the right-hand side of the weak formulation \eqref{eq:mc_weak}, or $\mat{W}^{-1} \mat{P} \bm{u}$ from the right-hand side of the strong formulation \eqref{eq:mc}.

LPS does not affect polynomials in $\mathbb{P}^{P}(\Omega)$, since 
\begin{equation*}
    \mat{P} \mat{V} = \mat{W} \mat{V} - \mat{W} \mat{V} \big( \underbrace{\mat{V}^T \mat{W} \mat{V}}_{\displaystyle \mat{I}_{N_P}} \big) = \mat{0}_{N \times N_P},
\end{equation*}
so it does not influence the polynomial exactness of the rest of the discretization.  By contrast, LPS is nonzero when applied to $\mat{Z}$, since $\mat{P} \mat{Z} = \mat{W} \mat{Z}$.  In addition to accuracy, one can show that LPS is conservative and energy/entropy stable~\cite{Hicken2020entropy}.

\begin{remark}\label{rmk:lps_scaling}
In practice, LPS is modified to make it dimensionally consistent and scaled to the magnitude of the local wave speed(s).  This can be achieved by inserting a diagonal, positive definite matrix --- or block-diagonal, positive-definite matrix for PDE systems --- between the projections and the norm $\mat{W}$.  If $\mat{\Lambda} \in \mathbb{R}^{N\times N}$ is such a positive-definite matrix then the modified LPS operator is
\begin{equation}\label{eq:lps_scaled}
    \mat{P} =  \big( \mat{I}_{N} - \mat{V} \mat{V}^T \mat{W} \big)^T \mat{W} \mat{\Lambda} \big( \mat{I}_{N} - \mat{V} \mat{V}^T \mat{W} \big)^T.
\end{equation}
Since $\mat{\Lambda}$ is positive definite, including this factor does not impact the stability of LPS.  Moreover, polynomial exactness and conservation are maintained.
\end{remark}

\begin{remark}
Consider the first-derivative operators 
\begin{equation*}
    \mat{D}_{x_d}^{(+)} = \mat{H}^{-1} \big( \mat{Q}_{x_d} + \mat{P} \big),
    \qquad\text{and}\qquad
    \mat{D}_{x_d}^{(-)} = \mat{H}^{-1} \big( \mat{Q}_{x_d} - \mat{P} \big).
\end{equation*}
It is straightforward to show that $\mat{D}_{x_d}^{(+)}$ and $\mat{D}_{x_d}^{(-)}$ are diagonal-norm upwind SBP operators~\cite{Mattsson2017upwindSBP}.  An attractive feature of upwind SBP operators is that they can be combined with flux-vector splitting to determine an appropriate scaling matrix $\mat{\Lambda}$ in\eqref{eq:lps_scaled}. See, for example, \cite{Lundgren2020efficient} and \cite{Hew2025strongly} for examples of upwind SBP operators in the context of finite-difference methods.
\end{remark}

\subsection{Generalizations}\label{sec:generalizations}

I chose to use an orthonormal basis and a degree $2P$ quadrature to simplify the presentation, but the theory can be adapted if we use a nonorthogonal basis for $\mathbb{P}^{P}(\Omega)$ and a degree $2P-1$ quadrature.

For a nonorthogonal basis $\{ \hat{\fnc{V}}_i \}_{i=1}^{N_P}$ and $Q \geq 2P -1$ exact quadrature, the MC operator becomes~\cite{Chan2018discretely}
\begin{equation}\label{eq:mc_nonortho}
    \mat{D}_{x_d} = \hat{\mat{V}}_{x_d} \, \mat{M}^{-1} \, \hat{\mat{V}}^T \,\mat{W},
\end{equation}
where $\mat{M} \equiv \hat{\mat{V}}^T \, \mat{W} \, \hat{\mat{V}}$ is the mass matrix, and $\hat{\mat{V}}$ and $\hat{\mat{V}}_{x_d}$ hold the basis and its derivative evaluated at the quadrature nodes; these matrices are analogous to $\mat{V}$ and $\mat{V}_{x_d}$ in the orthonormal case.  In addition, the boundary operator remains $\mat{E}_{x_d} = \mat{R}_{\Gamma}^T \, \mat{W}_{\Gamma} \, \mat{N}_{x_d} \, \mat{R}_{\Gamma}$, but the boundary-projection operator is now defined by~\cite{Chan2018discretely}
\begin{equation}\label{eq:proj_nonortho}
    \mat{R}_{\Gamma} = \hat{\mat{V}}_{\Gamma} \, \mat{M}^{-1} \, \hat{\mat{V}}^T \, \mat{W}.
\end{equation}

\paragraph{Nonorthogonal basis with $2P$ exact quadrature}

If the volume quadrature remains $2P$ exact, then the MC operator \eqref{eq:mc_nonortho} is equivalent to one based on an orthonormal basis.  To show this, I use the Cholesky decomposition of the symmetric, positive definite mass matrix, i.e.~$\mat{M} = \mat{L} \mat{L}^T$.  Then
\begin{equation*}
    \mat{D}_{x_d} = \hat{\mat{V}}_{x_d} \mat{L}^{-T} \mat{L}^{-1} \hat{\mat{V}}^T \mat{W} = \mat{V}_{x_d} \mat{V}^{T} \mat{W},
\end{equation*}
where I have made the identifications $\mat{V} = \hat{\mat{V}} \mat{L}^{-T}$ and $\mat{V}_{x_d} = \hat{\mat{V}}_{x_d} \mat{L}^{-T}$.  It is then easy to verify that $\mat{V}$ is orthonormal with respect to $\mat{W}$:
\begin{equation*}
    \mat{V}^T \mat{W} \mat{V} = \mat{L}^{-1} \hat{\mat{V}}^T \mat{W} \hat{\mat{V}} \mat{L}^{-T} = \mat{L}^{-1} \mat{L} \mat{L}^T \mat{L}^{-T} = \mat{I}_{N_P}.
\end{equation*}
In other words, the role of the (inverse) mass matrix in \eqref{eq:mc_nonortho} and \eqref{eq:proj_nonortho} is to transform the nonorthogonal basis into an orthonormal one.  Thus, the theory developed for the orthonormal basis applies to the nonorthogonal case by recognizing that there is an implicit orthonormal basis defined by $\{ \fnc{V}_i = \sum_{j=1}^{i} [\mat{L}^{-1}]_{ij} \hat{\fnc{V}}_j \}_{i=1}^{N_P}$ and because $\mat{L}$ is independent of the quadrature when $Q=2P$.

\paragraph{Degree $2P-1$ volume quadrature}

The theory for first-derivative SBP operators tells us that we need only $2P-1$ exactness from the volume quadrature.  This is also true for MC operators.  In the following list, I explain how the various results must be adapted.

\begin{description}
\item[Theorem \ref{thm:mc_are_sbp}:] Consider the MC operator defined by \eqref{eq:mc_nonortho}, but using a degree $2P-1$ exact volume quadrature.  It is straightforward to show that such an operator is also a diagonal-norm SBP operator by following the same steps as used in the proof of Theorem~\ref{thm:mc_are_sbp}.

\item[Proposition \ref{prop:convert}:] If $\mat{D}_{x_d} \in \mathbb{R}^{N\times N}$ is a degree $P$ first-derivative operator that acts at the nodes of a degree $2P-1$ quadrature rule, then $\hat{\mat{V}} \mat{M}^{-1} \hat{\mat{V}}^T \mat{W} \mat{D}_{x_d} \hat{\mat{V}} \mat{M}^{-1} \hat{\mat{V}}^T \mat{W}$ is a degree $P$ modal collocation operator of the form~\eqref{eq:mc_nonortho}.

\item[Lemma \ref{thm:mc_boundary}:] A degree $2P-1$ volume quadrature was already considered in Lemma 1 of \cite{Chan2018discretely}.  Indeed, to adapt Lemma~\ref{thm:mc_boundary}, we simply replace the boundary-projection operator with the one defined by~\eqref{eq:proj_nonortho}.  Furthermore, note that compatibility, Definition~\ref{def:compatibility}, can be satisfied with a degree $2P-1$ exact volume quadrature and $2P$ exact boundary quadrature, at least for elements with piecewise linear facets.

\item[Theorem \ref{thm:equivalence}:] The main result continues to hold provided the source and initial condition are projected using $\hat{\mat{V}} \mat{M}^{-1} \hat{\mat{V}}^T \mat{W}$ rather than $\mat{V} \mat{V}^T \mat{W}$.  The matrix $\mat{Z}$ is now the nullspace basis for $\hat{\mat{V}}^T \mat{W}$, but it is still orthonormalized with respect $\mat{W}$.

\item[Corollary \ref{cor:spectrum}:] The spectrum of the MC semi-discretization is partitioned in the same manner described in Corollary \ref{cor:spectrum}, provided the DG semi-discretization uses the same $2P-1$ exact volume quadrature as the MC semi-discretization.  Thus, the nontrivial eigenvalues of $\mat{A}$ continue to be the eigenvalues of the DG matrix $\tilde{\mat{A}}$.
\end{description}

\paragraph{Nonpositive quadrature weights}

The proof of Theorem~\ref{thm:equivalence} did not explicitly require the quadrature rule to be a positive-interior rule.  This points to an interesting result: if the modal DG discretization is energy stable, then the MC discretization is also energy stable, \emph{even if it uses a volume quadrature rule with negative weights}.  I will demonstrate this in Section~\ref{sec:results}.

\section{Numerical experiments}\label{sec:results}

\subsection{MC and SBP operator construction}



I use triangle elements for the numerical experiments.  Triangular and tetrahedral meshes are often used for complex geometries that arise in engineering applications.  Consequently, there has been considerable interest in developing discretizations that are suitable for simplex elements, including in the SBP literature~\cite{Hicken2016multidimensional,Crean2018entropy,Chan2018discretely,Worku2025tensor,Worku2025veryhigh,Marchildon2020optimization,Montoya2024efficient}.

The starting point for the MC and SBP triangle operators is a commonly used (degenerate) mapping from the square domain $\Omega_{\xi} = [-1,1]^2$ to the reference right triangle~\cite{Dubiner1991spectral}.
\begin{equation*}
    \bm{x}(\bm{\xi}) = \begin{bmatrix} x_1(\bm{\xi}) \\ x_2(\bm{\xi}) \end{bmatrix} = \begin{bmatrix} \frac{1}{2}(1 + \xi_1)(1 - \xi_2) - 1 \\ \xi_2 \end{bmatrix},
\end{equation*}
where $\bm{\xi} = \big[ \xi_1, \xi_2\big]^T \in \Omega_{\xi}$ are the reference domain coordinates.  Following~\cite{Montoya2024efficient}, I use a tensor-product grid consisting of $Q/2+1$ Legendre-Gauss (LG) nodes\footnote{I use even $Q$ for all cases considered here, so $Q/2$ is an integer.} in both the $\xi_1$ and $\xi_2$ coordinate directions, which yields a volume quadrature that is $2Q+1$ exact in $\bm{\xi}$ space. The additional ``$+1$'' degree of accuracy in the quadrature rule is needed for the Jacobian of the mapping.  The boundary quadrature also uses $Q/2+1$ nodes along the three edges of the triangle, which allows the min-norm SBP operators (described below) to use sparse reconstruction along collinear volume nodes.  Figure~\ref{fig:triangle_nodes} illustrates the tensor-product nodes for $Q=4$, $Q=8$ and $Q=12$ quadrature rules.

\begin{remark}
    The nodes clustering near the top of the triangles in Figure~\ref{fig:triangle_nodes} are responsible for the large spectral radii typically observed for nodal spectral-element methods on triangles. As the results demonstrate, MC operators are not impacted by this clustering.
\end{remark}

\begin{figure}[tbp]
\includegraphics[width=0.32\textwidth]{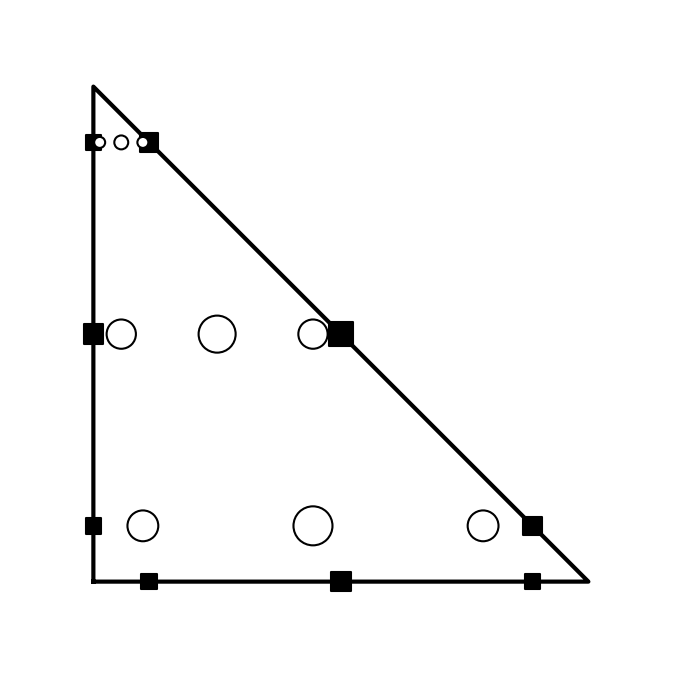}%
\includegraphics[width=0.32\textwidth]{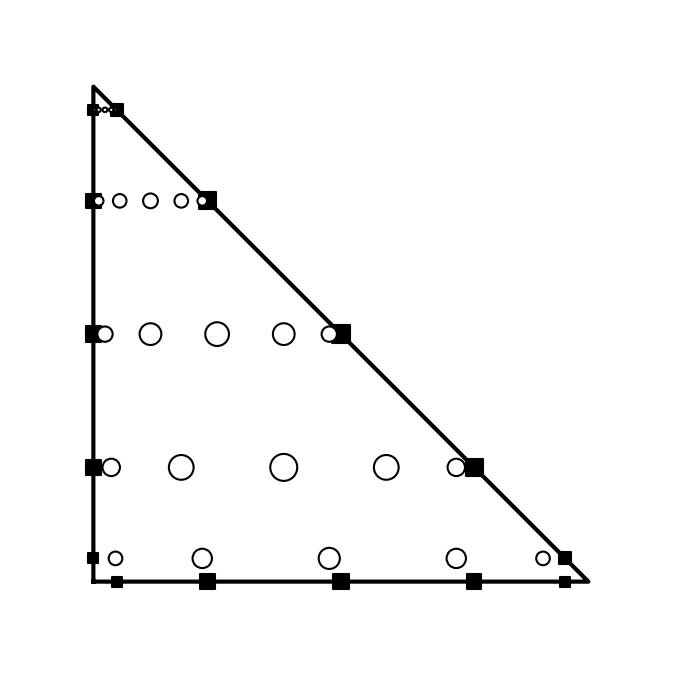}%
\includegraphics[width=0.32\textwidth]{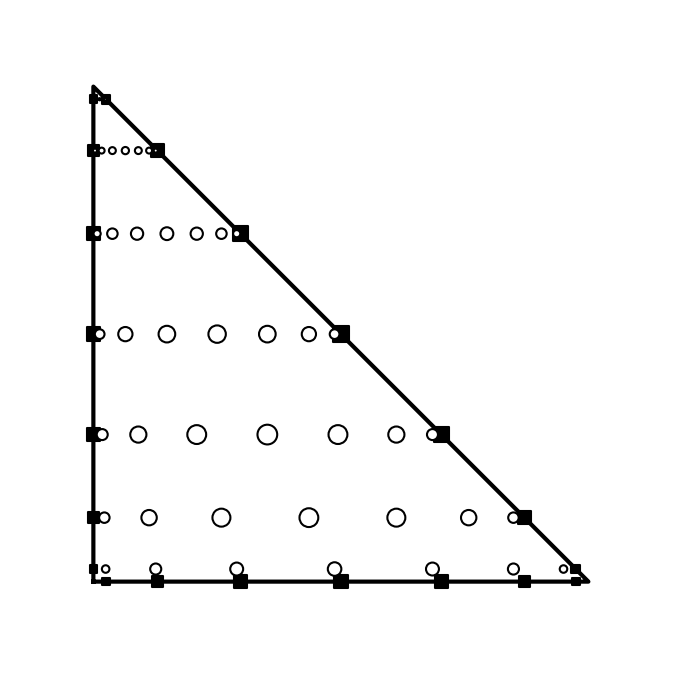}\\%
\caption{Quadrature nodes based on $Q=4$ (left), $Q=8$ (center), and $Q=12$ (right) for the triangle.  These quadratures have $N=9$, $N=25$, and $N=49$ solution nodes, respectively.  The white circles are the solution nodes and the black squares are the boundary quadrature nodes.  The size of a node is proportional to the magnitude of its quadrature weight. \label{fig:triangle_nodes}}
\end{figure}

The MC operators are constructed following Definition~\ref{def:mc} with the Proriol-Koornwinder-Dubiner polynomials~\cite{Proriol1957famille,Koornwinder1975two,Dubiner1991spectral} providing the orthonormal basis $\{\fnc{V}_{i}\}_{i=1}^{N_P}$.

The SBP operators used for comparison have their skew-symmetric matrix $\mat{S}_{x_d} \equiv \big( \mat{Q}_{x_d} - \mat{Q}_{x_d}^T\big)/2$ constructed using Equation (9) from~\cite{Hicken2025constructing}.  I verified numerically that the resulting $\mat{S}_{x_d}$ matrices are equivalent to solving the accuracy condition $\mat{S}_{x_d} \mat{V} = \mat{W} \mat{V}_{x_d} - \frac{1}{2} \mat{E}_{x_d} \mat{V}$ in a minimum-norm sense; hence, I refer to these as min-norm SBP operators.  I constructed the symmetric matrices $\mat{E}_{x_d}$ by using extrapolation operators along collinear volume nodes, thus ensuring that the boundary operators are sparse.

\subsection{Constant-coefficient advection}

\paragraph{IBVP and discretizations}

The first study is based on the following constant-coefficient advection problem:
\begin{equation}\label{eq:advection}
\begin{alignedat}{2}
    \frac{\partial \fnc{U}}{\partial t} &= -\sum_{d=1}^{D} \alpha_d \frac{\partial \fnc{U}}{\partial x_d},&\qquad &\forall\, \bm{x} \in \Omega, t \in [0,2], \\
    \fnc{U}(\bm{x},t) &= \bm{G}(\bm{x},t), &\qquad&\forall\, \bm{x} \in \Gamma^{-}, t \in [0,2], \\
    \fnc{U}(\bm{x},0) &= \bm{G}(\bm{x},0), &\qquad&\forall\, \bm{x} \in \Omega,
\end{alignedat}
\end{equation}
where, for simplicity, the spatial domain is the right triangle
\begin{equation*}
\Omega = \{ \bm{x} \in \mathbb{R}^2 \,|\, -1 \leq x_1 \leq 1, -1 \leq x_2 \leq -x_1\},
\end{equation*}
which is identical to the domain of the elements shown in Figure~\ref{fig:triangle_nodes}.  The advection velocity is given by $\bm{\alpha} = \big[\alpha_1, \alpha_2\big]^T = \big[1,1\big]^T/\sqrt{2}$, so the flow direction is $45^\circ$ counterclockwise from the $x_1$ axis and perpendicular to the hypotenuse.  Let $\alpha_n(\bm{x}) = \sum_{d=1}^{D} \alpha_d n_{x_d}(\bm{x})$ denote the normal component of the advection velocity at $\bm{x}$.  Then the inflow boundary 
\begin{equation*}
    \Gamma^{-} = \{ \bm{x} \in \Gamma \,|\, \alpha_n(\bm{x}) \leq 0 \},
\end{equation*}
consists of the bottom and left edges of the right triangle.  The boundary and initial conditions are defined using the exact solution
\begin{equation*}
    \fnc{G}(\bm{x}, t) = \sin \Big( 2\pi \big(  (x_1 + x_2)/\sqrt{2}  - t \big) \Big), \qquad \forall\, \bm{x} \in \Omega, t \in [0,2].
\end{equation*}

The MC and SBP semi-discretization of \eqref{eq:advection} can be inferred from \eqref{eq:mc} by replacing $\bm{f}_{d}(\bm{u})$ with $\alpha_{d} \bm{u}$ and eliminating the source term:
\begin{equation}\label{eq:advection_dis}
    \begin{alignedat}{2}
    \frac{d \bm{u}}{dt} &= -\sum_{d=1}^D \alpha_d \mat{D}_{x_d} \, \bm{u} - \mat{W}^{-1} \, \mat{R}_{\Gamma}^T \, \mat{W}_{\Gamma} \, \big( \bm{f}_n^*(\mat{R}_\Gamma \bm{u}, \bm{g}_\Gamma) - \bm{f}_n(\bm{u}) \big),&\qquad &\forall\, t \in [0,2], \\
    \bm{u}(0) &= \mat{V}\, \mat{V}^T \, \mat{W}\, \bm{g}(0),&&
    \end{alignedat}
\end{equation}
where the initial condition is based on the projected exact solution, $\fnc{G}(\bm{x},0)$.  The vector $\bm{f}_n(\bm{u})$ is defined by Equation~\eqref{eq:fn}, and the numerical flux function $\bm{f}_n^*(\mat{R}_\Gamma \bm{u}, \bm{g}_\Gamma)$ is defined at each boundary quadrature node $\hat{\bm{x}}_m \in X_{\Gamma}$ as
\begin{equation*}
    \big[ \bm{f}_n^*(\mat{R}_\Gamma \bm{u}, \bm{g}_\Gamma) \big]_m =
    \begin{cases} 
    \alpha_n(\hat{\bm{x}}_m) g_m(t), & \text{if } \alpha_n(\hat{\bm{x}}_m) \leq 0,\\
    \alpha_n(\hat{\bm{x}}_m) u_m(t), & \text{if } \alpha_n(\hat{\bm{x}}_m) > 0,
    \end{cases}
\end{equation*}
where $g_m(t) = [\bm{g}(t)]_m = \fnc{G}(\hat{\bm{x}}_m,t)$ is the boundary data evaluated at $\hat{\bm{x}}_m$, and $u_m(t) = [\mat{R}_\Gamma \bm{u}(t)]_m$ is the numerical solution projected to $\hat{\bm{x}}_m$.  Using this definition of the numerical flux, I rearrange the boundary terms in \eqref{eq:advection_dis} into a term that depends on the boundary data and a term that depends on the numerical solution:
\begin{equation}\label{eq:boundary_partition}
    \mat{W}^{-1} \, \mat{R}_{\Gamma}^T \, \mat{W}_{\Gamma} \, \big( \bm{f}_n^*(\mat{R}_\Gamma \bm{u}, \bm{g}_\Gamma) - \bm{f}_n(\bm{u}) \big)
    = \mat{W}^{-1} \, \mat{R}_{\Gamma}^T \, \mat{W}_{\Gamma} \mat{N}_{-} \bm{g}(t) + \mat{W}^{-1} \, \mat{R}_{\Gamma}^T \, \mat{W}_{\Gamma} \mat{N}_{+} \mat{R}_{\Gamma} \bm{u}(t),
\end{equation}
where I have introduced the diagonal matrices $[\mat{N}_{-}]_{mm} = \min\big(0,\alpha_n(\hat{\bm{x}}_m)\big)$ and $[\mat{N}_{+}]_{mm} = \max\big(0,\alpha_n(\hat{\bm{x}}_m)\big)$, and I used the identity $\sum_{d=1}^D \alpha_d \mat{N}_{x_d} = \mat{N}_{-} + \mat{N}_{+}$ to decompose $\bm{f}_n(\bm{u})$.

The spectrum for the semi-discretization \eqref{eq:advection_dis} requires writing the equation in the form $d\bm{u}/dt = \mat{A} \bm{u} + \bm{b}$. To this end, I substitute \eqref{eq:boundary_partition} into \eqref{eq:advection_dis} and find
\begin{equation}\label{eq:advection_semidis}
\frac{d \bm{u}}{dt} = \underbrace{\Bigg( -\sum_{d=1}^{D} \alpha_d \mat{D}_{x_d} + 
\mat{W}^{-1} \mat{R}_{\Gamma}^T \mat{W}_\Gamma \mat{N}_{-} \mat{R}_{\Gamma} \Bigg)}_{\displaystyle \equiv \mat{A}}\bm{u} \;\; \underbrace{\rule[-3ex]{0ex}{3ex} - \mat{W}^{-1} \mat{R}_{\Gamma}^T \mat{W}_{\Gamma} \mat{N}_{-} \bm{g}(t)}_{\displaystyle \equiv \bm{b}(t)}.
\end{equation}

The modal DG discretization is obtained by substituting $\bm{u}(t) = \mat{V}\, \tilde{\bm{u}}(t)$ into Equation \eqref{eq:advection_semidis} and left multiplying the equation by $\mat{V}^T \mat{W}$.

\paragraph{Verification of MC-DG equivalence}

I solve the MC and modal DG initial-value problems using the five-stage, fourth order low-storage method of Carpenter and Kennedy~\cite{Carpenter1994fourth} as implemented in the Julia \texttt{DifferentialEquations.jl} library~\cite{Rackauckas2017differentialequations}.  To ensure that the temporal discretizations are equivalent, I use a fixed step size of $\Delta t = 2/(P+1)^2$.  

Table \ref{tab:l2diff_advection} lists the differences between the MC and modal DG solutions at the terminal time $T=2$ for degrees $P=3, 6, 9$ and 12.  The difference is evaluated using the quadrature-based approximation of the $L^2$ norm:
\begin{equation}\label{eq:l2_diff}
    L^2~\text{Diff.} = \sqrt{(\mat{V}\tilde{\bm{u}} - \bm{u})^T \mat{W} (\mat{V}\tilde{\bm{u}} - \bm{u})}.
\end{equation}
For each of the four polynomial degrees considered, the table shows the differences obtained using quadratures that are $2p$, $4p$, and $6p$ exact.  As expected from Theorem~\ref{thm:equivalence}, the MC and modal DG solutions are equivalent to machine precision.

\begin{table}[tbp]
\centering
\caption{$L^2$ difference between MC and modal DG solutions to the constant-coefficient advection problem on a triangular domain. \label{tab:l2diff_advection}}
\begin{tabular}{clll}
\rule{0ex}{5ex}
 & \multicolumn{3}{c}{\textbf{quadrature exactness}} \\ \cmidrule{2-4}
\textbf{degree $P$} & \multicolumn{1}{c}{$2P$} & \multicolumn{1}{c}{$4P$} & \multicolumn{1}{c}{$6P$} \\ \hline
\rule{0ex}{3ex}%
3 & 6.6533e-16 & 3.2709e-16 & 5.0756e-16\\[1ex]
6 & 1.0038e-15 & 3.4708e-15 & 2.2246e-15\\[1ex]
9 & 2.5324e-15 & 2.6853e-15 & 4.4794e-15\\[1ex]
12 & 4.7917e-15 & 2.4298e-15 & 4.1976e-15\\[1ex]\hline
\end{tabular}
\end{table}

\paragraph{Comparison of MC and SBP spectra}

Figures~\ref{fig:spectrum_p3} and \ref{fig:spectrum_p6} present representative spectra for the matrix $\mat{A}$ in the semi-discretization~\eqref{eq:advection_semidis}.  The plots in Figure~\ref{fig:spectrum_p3} show the spectra for $P=3$ when using quadratures with $Q=6$, $Q=12$, and $Q=18$, which correspond to MC and SBP operators with $N=16$, $N=49$, and $N=100$ nodes, respectively.  Similarly, Figure~\ref{fig:spectrum_p6} displays the spectra for $P=6$ with $Q=12$, $Q=24$, and $Q=36$; these sixth-order operators have $N=49$, $169$, and $361$ nodes, respectively.

From Corollary 1, a degree $P$ MC-based discretization should have a spectrum that matches the spectrum of its associated modal DG discretization, notwithstanding the $N-N_P$ repeated zero eigenvalues of the MC discretization.  This result is verified by the (upper) plots in Figures~\ref{fig:spectrum_p3} and \ref{fig:spectrum_p6}.  In this particular case, the nonzero eigenvalues are independent of $Q \geq 2P$, since the IBVP has a constant-coefficient advection field and no mapping is applied to the reference element.

Contrast the MC eigenvalues with those of the min-norm SBP operator; in particular, compare the range for the real axes.  The catastrophic growth in the SBP spectral radius has two causes.  First and most significantly, the SBP operators are impacted by the clustering of the nodes at the top of the triangles; see Figure~\ref{fig:triangle_nodes}.  This effect is not unique to SBP operators and has long been known in the spectral element community~\cite{Dubiner1991spectral}.  The second cause is the min-norm solution to the SBP accuracy conditions, which does not control the spectral radius.

\begin{figure}[tbp]
\includegraphics[width=0.32\textwidth]{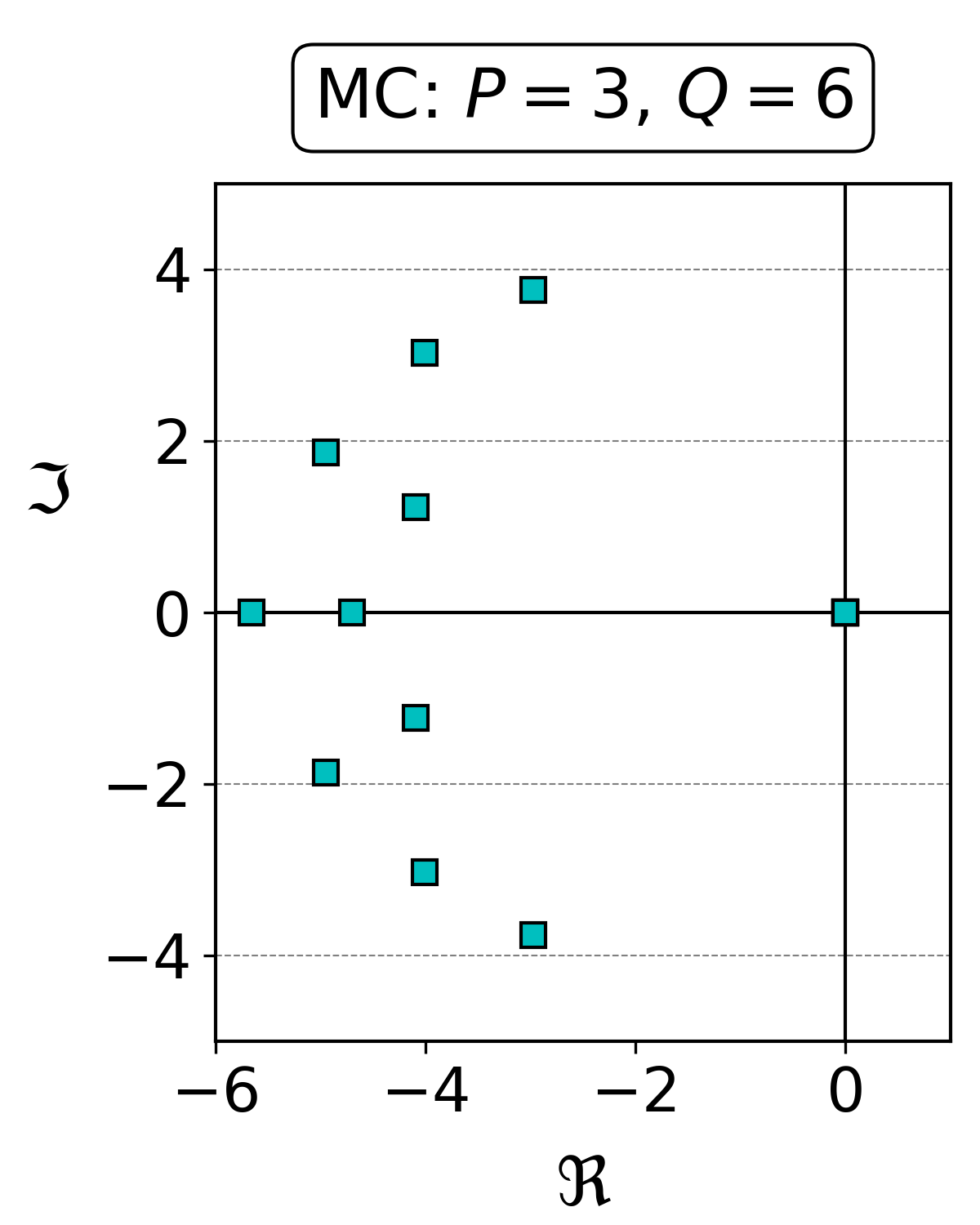}%
\includegraphics[width=0.32\textwidth]{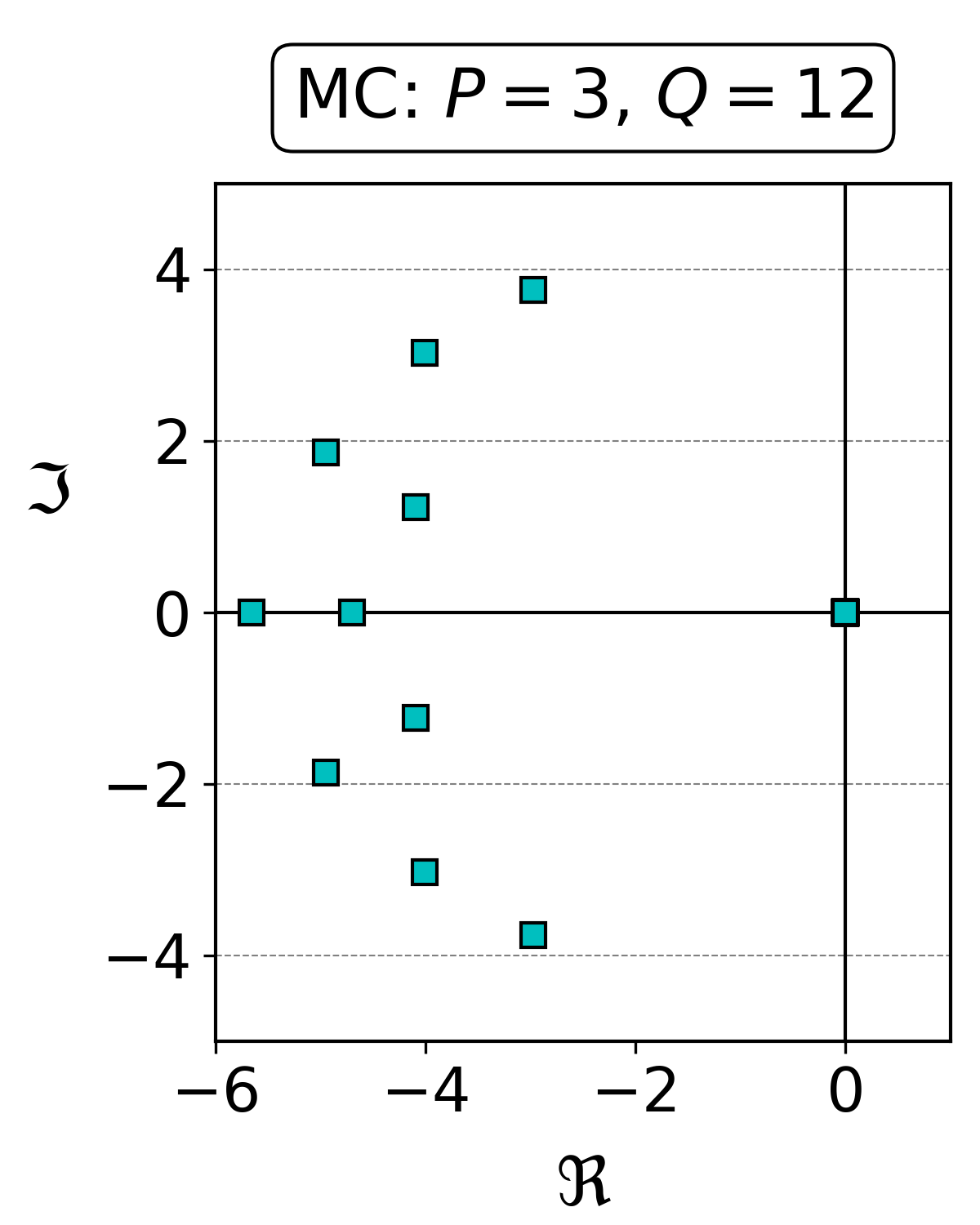}%
\includegraphics[width=0.32\textwidth]{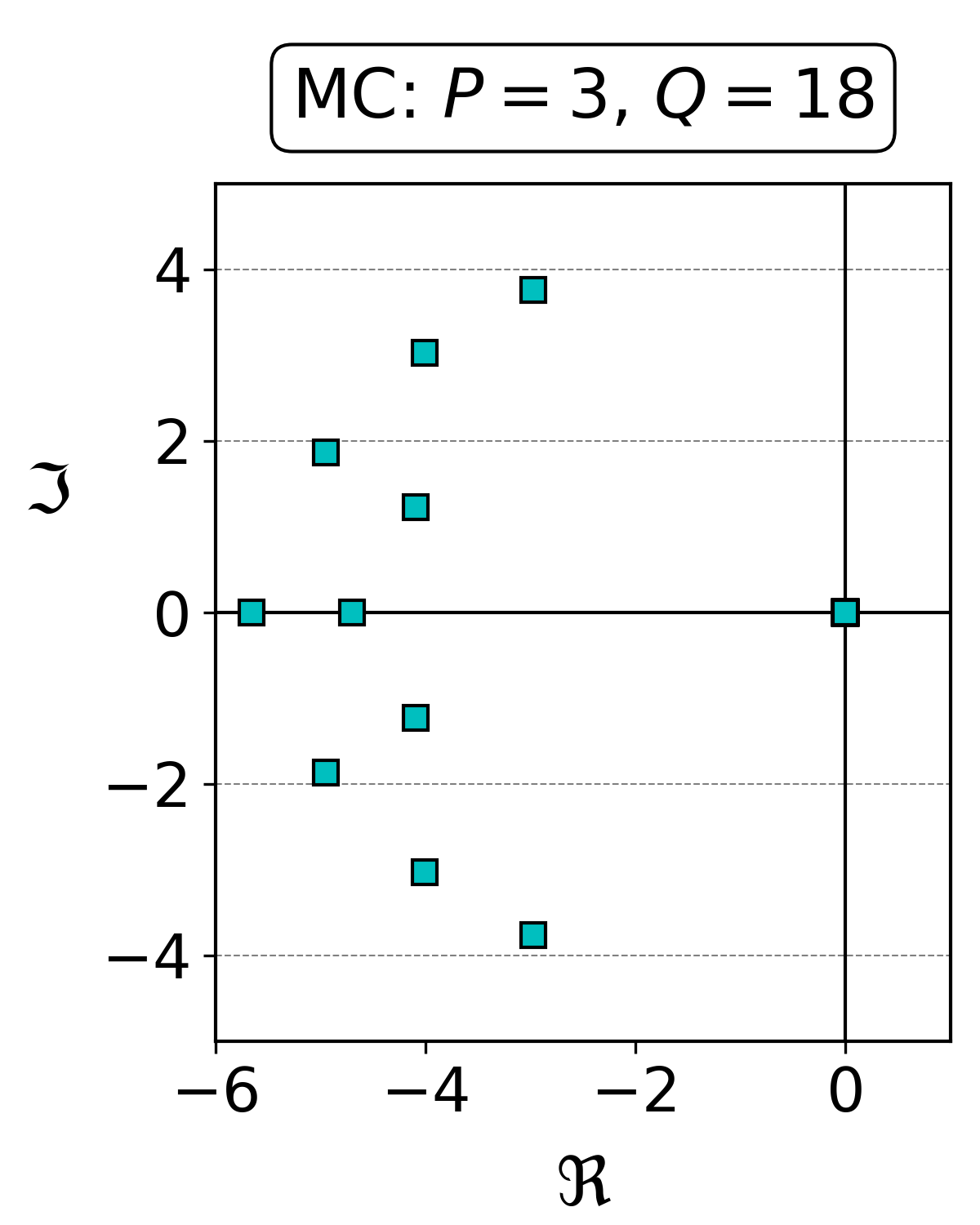}\\%
\includegraphics[width=0.32\textwidth]{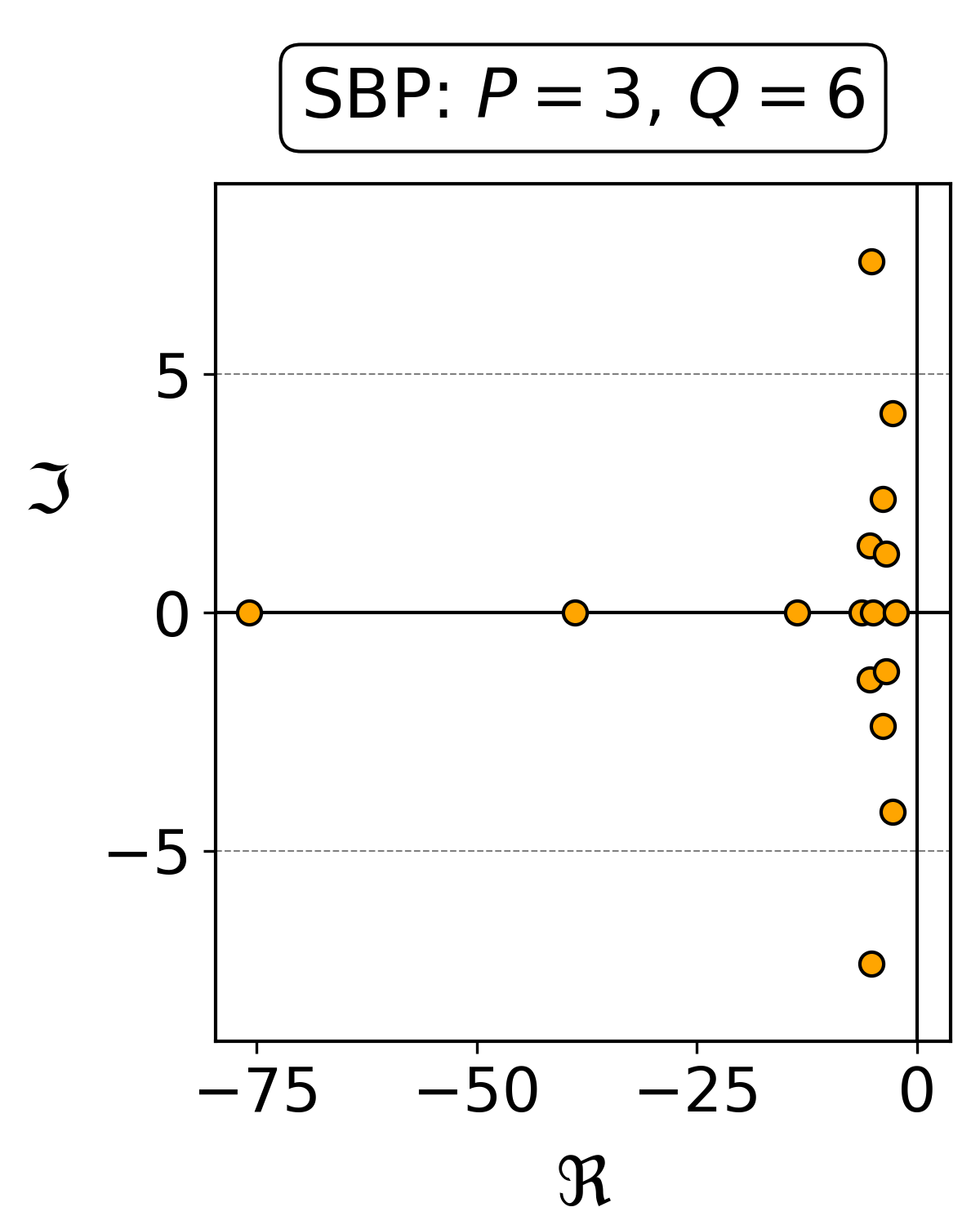}%
\includegraphics[width=0.32\textwidth]{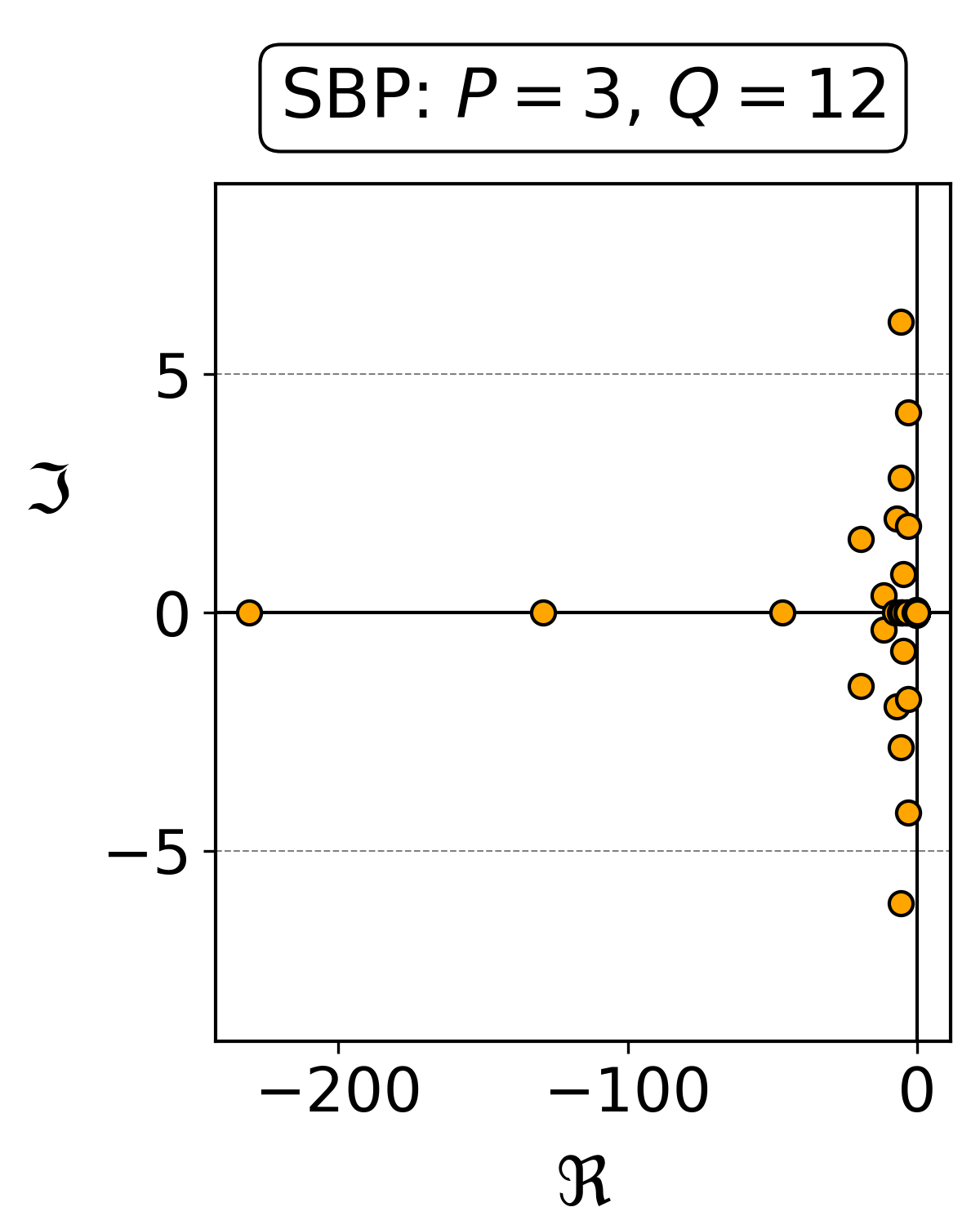}%
\includegraphics[width=0.32\textwidth]{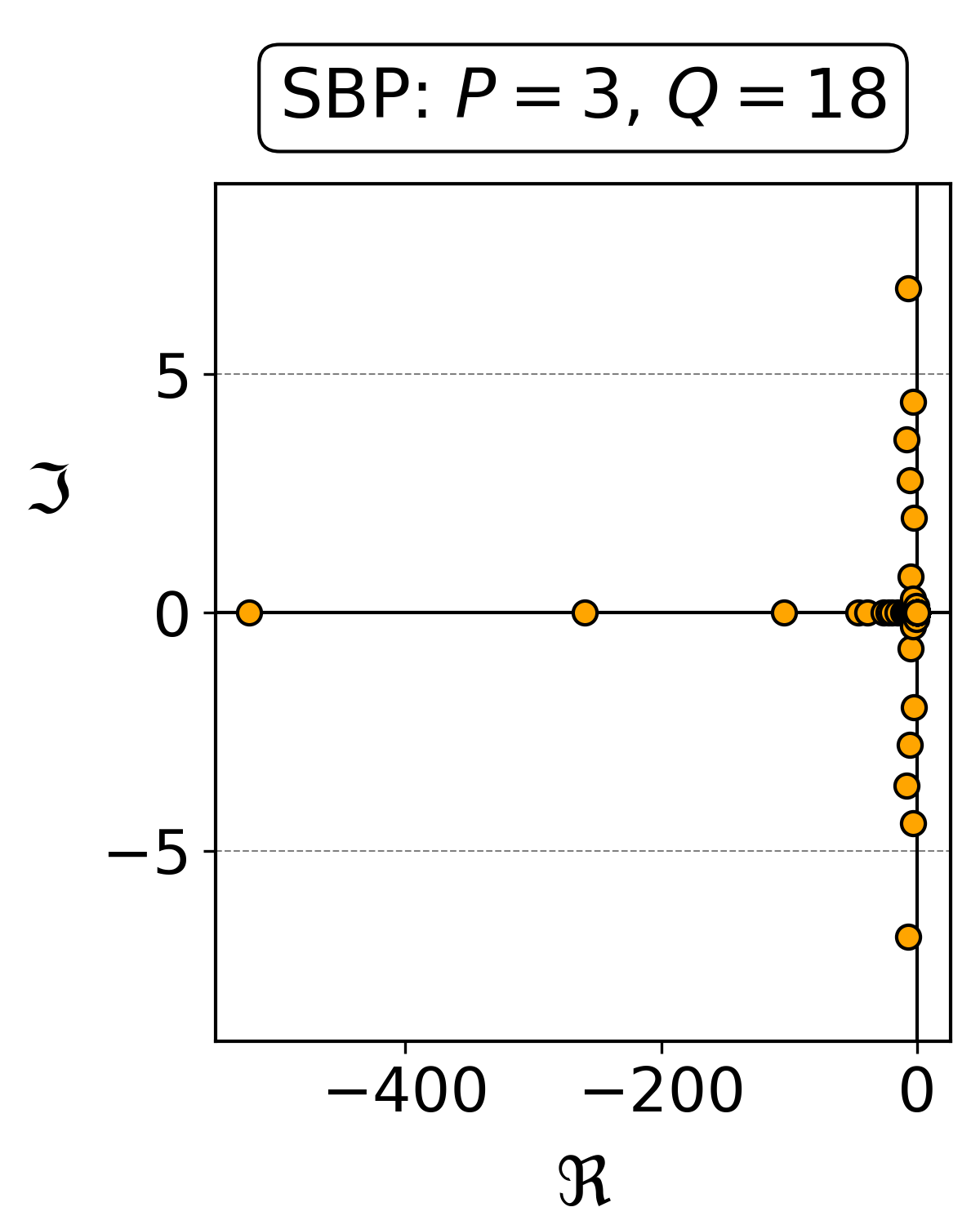}\\%
\caption{Eigenvalue distributions for the triangle-advection problem using $P=3$ MC (top) and SBP (bottom) operators with different quadrature degrees $Q$. \label{fig:spectrum_p3}}
\end{figure}

\begin{figure}[tbp]
\includegraphics[width=0.32\textwidth]{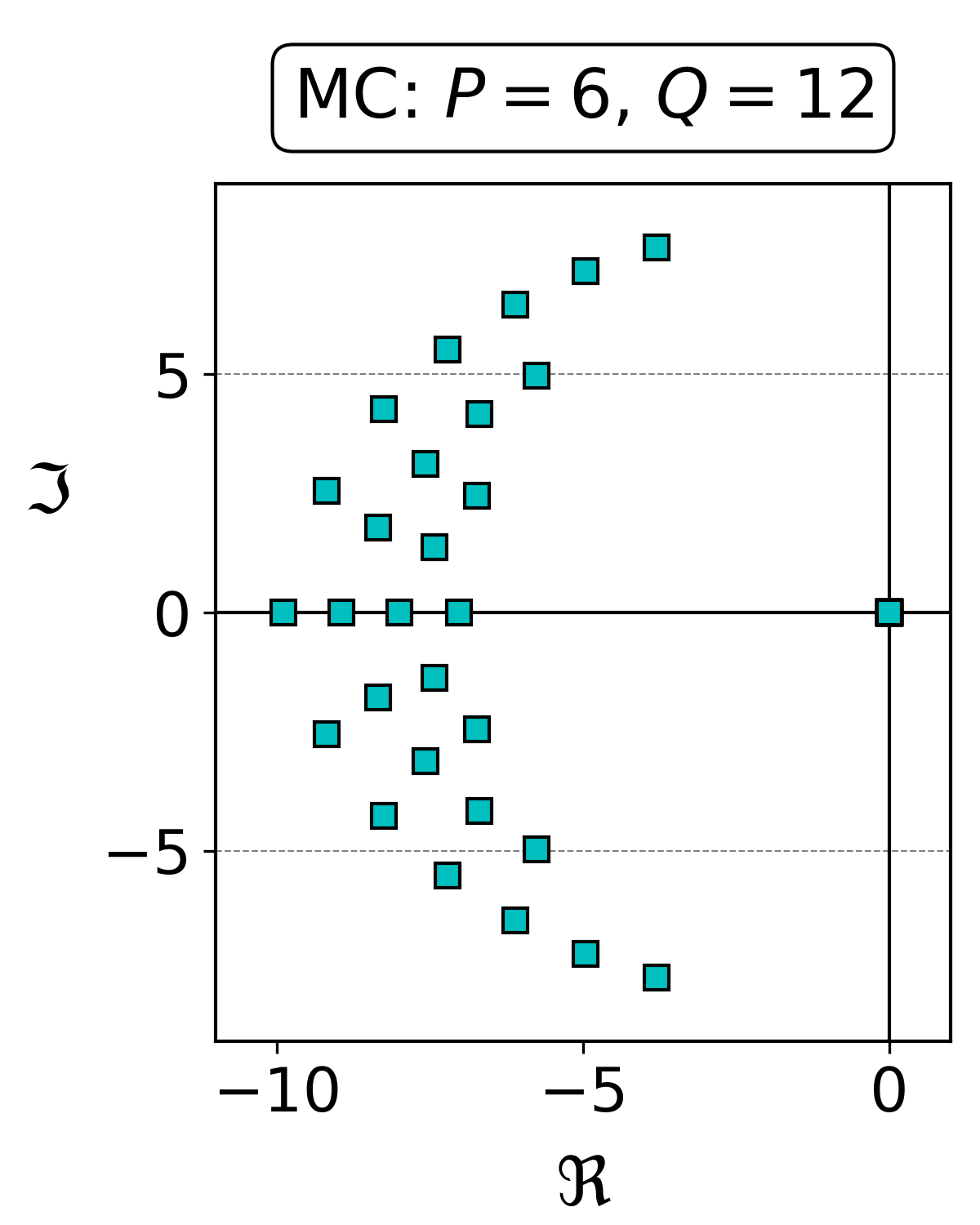}%
\includegraphics[width=0.32\textwidth]{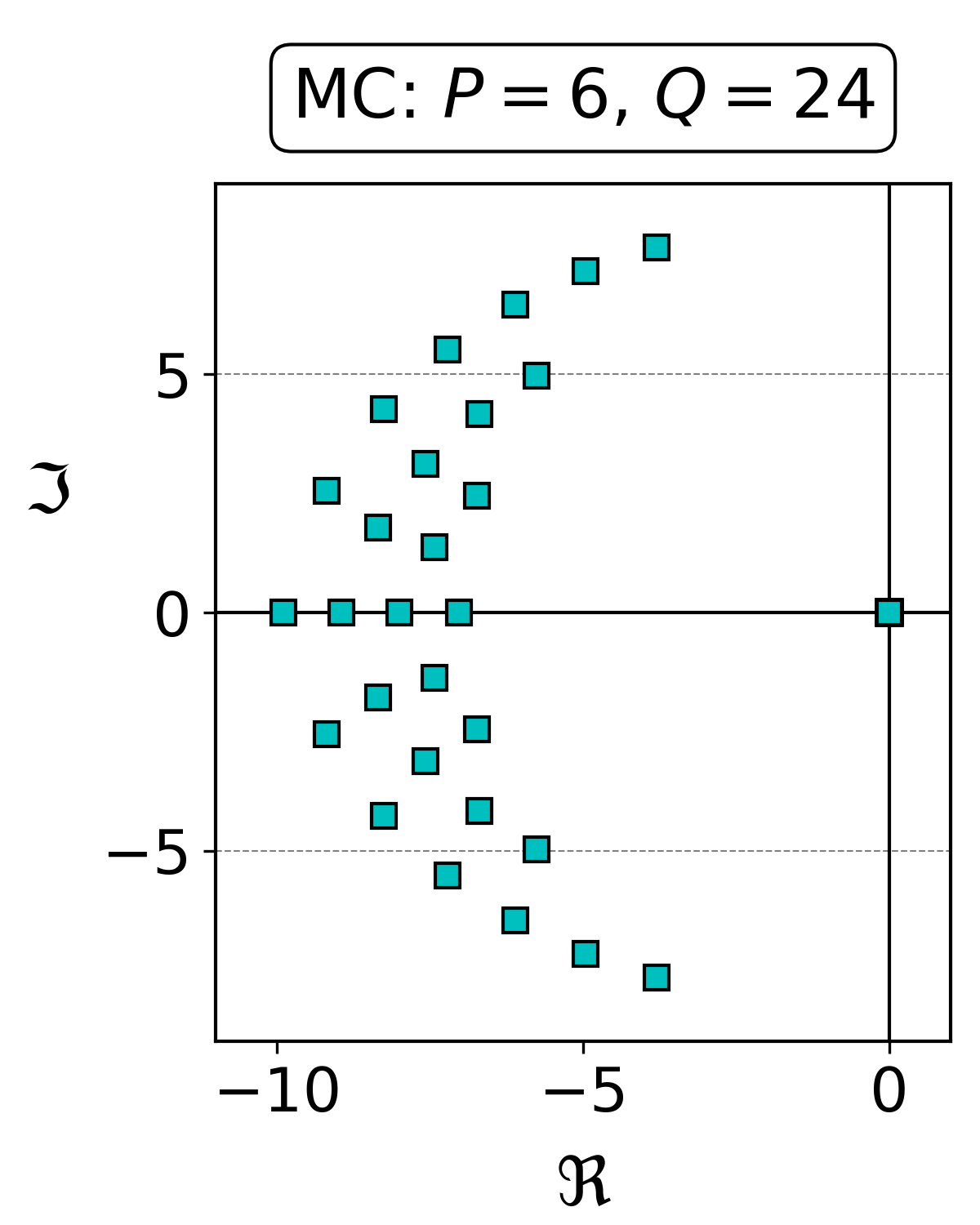}%
\includegraphics[width=0.32\textwidth]{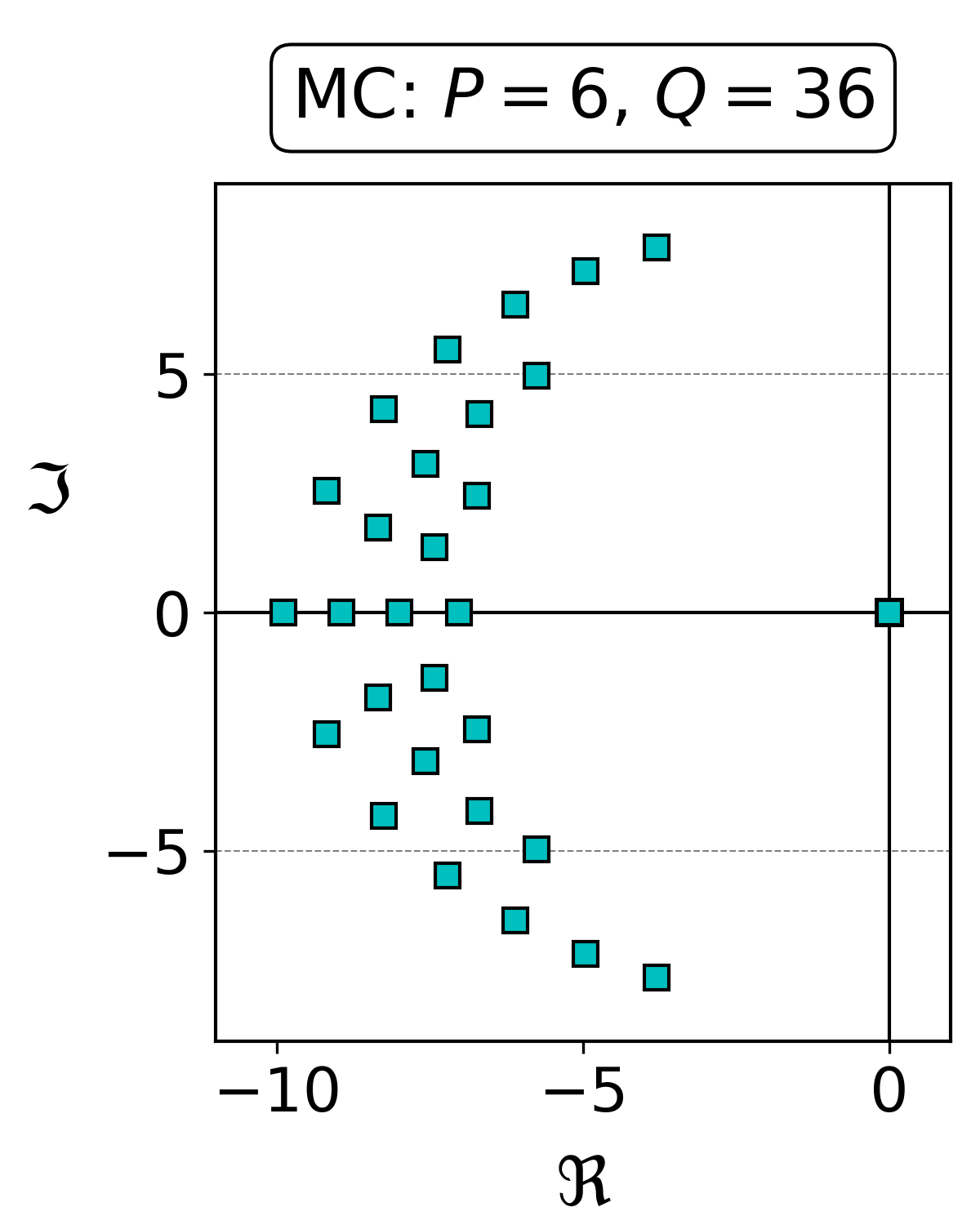}\\%
\includegraphics[width=0.32\textwidth]{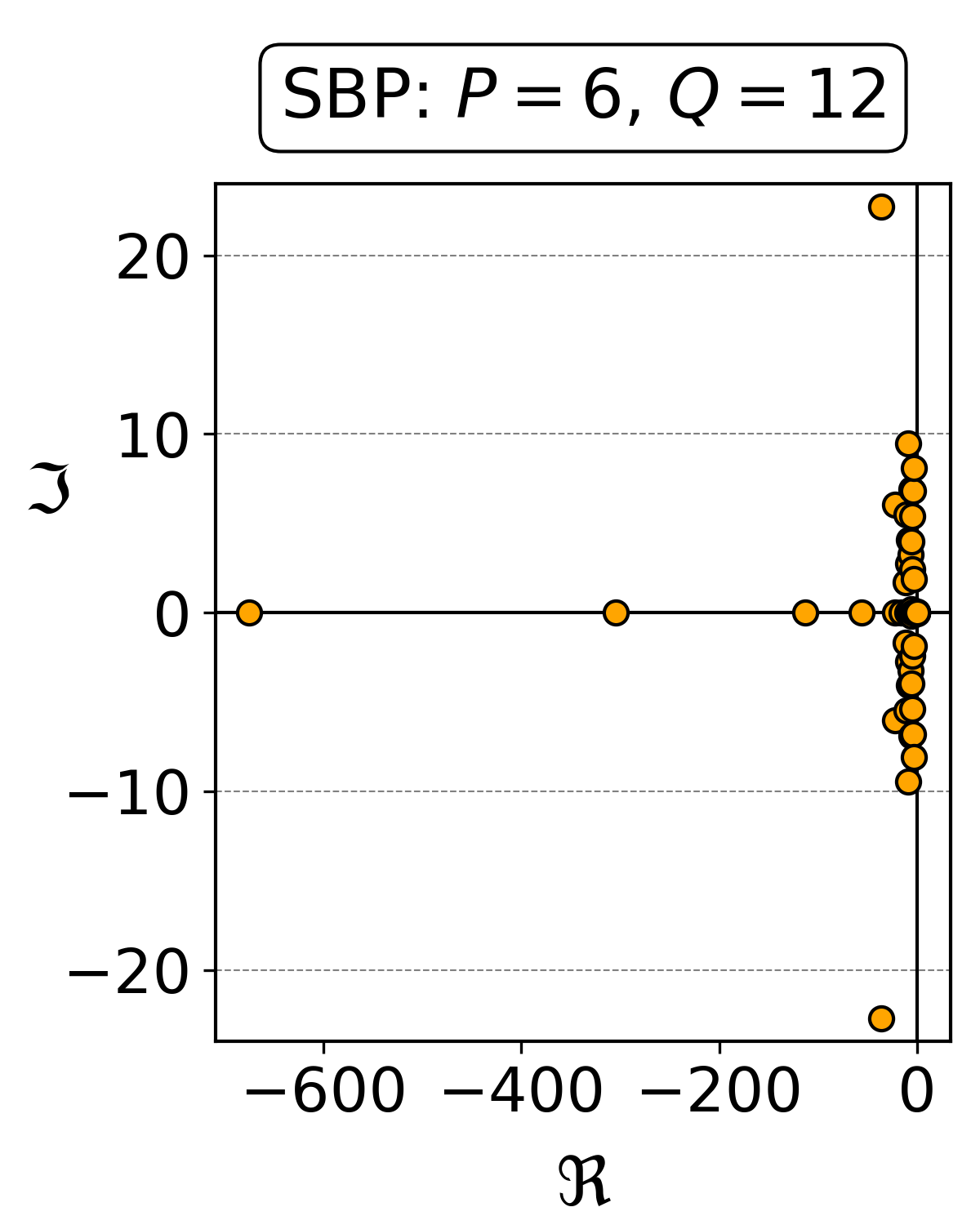}%
\includegraphics[width=0.32\textwidth]{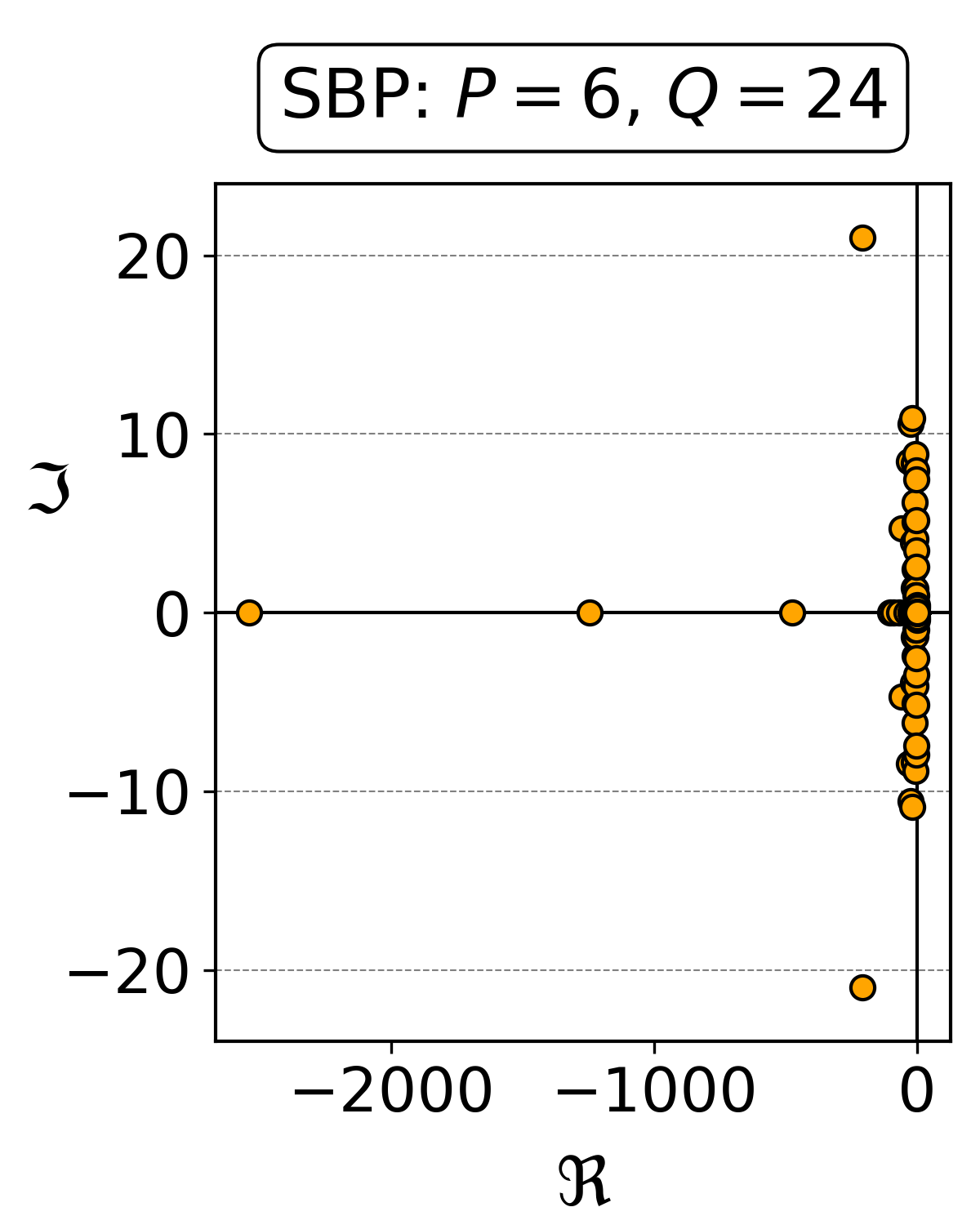}%
\includegraphics[width=0.32\textwidth]{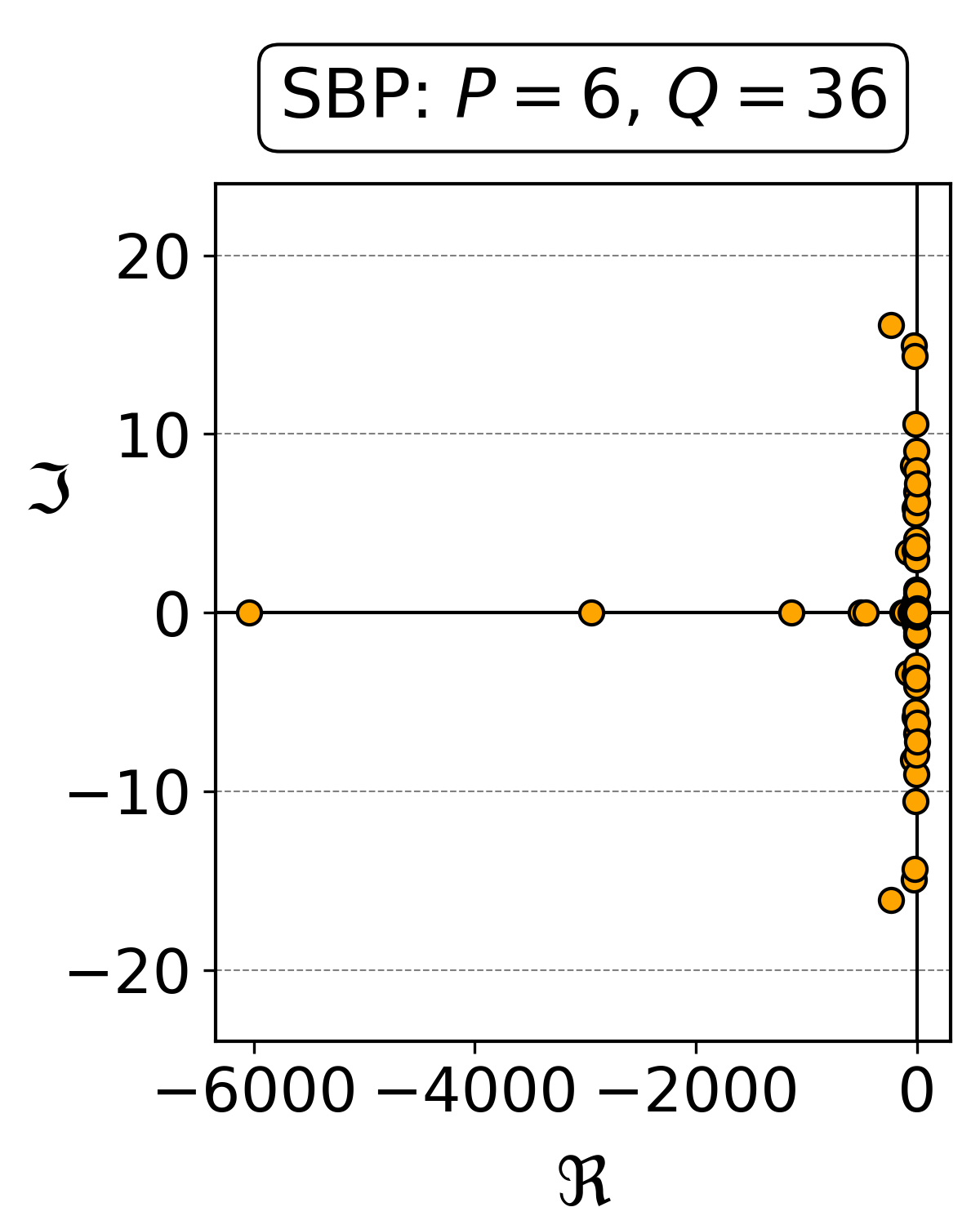}\\%
\caption{Eigenvalue distributions for the triangle-advection problem using $P=6$ MC (top) and SBP (bottom) operators with different quadrature degrees $Q$. \label{fig:spectrum_p6}}
\end{figure}

\paragraph{Equivalence and negative quadrature weights}

I conclude the constant-coefficient advection study with a short example of MC-DG equivalence when the quadrature has negative weights.  I use the same semi-discretization~\eqref{eq:advection_semidis} as before, but instead of constructing the MC operators using the collapsed tensor-product quadrature, I adopt quadrature 4c from \cite[Table 4]{Liu1998exact}.  This is an $N=10$ point quadrature that is exact for polynomials up to fourth degree, and I use it to construct an MC operator of degree $P=2$.

The quadrature nodes include the vertices of the triangle, and their corresponding weights are negative; the weights are $-1/30$ on the reference triangle.  Nevertheless, when I compare the MC and modal DG solutions at $t=2$ using the $L^2$ difference \eqref{eq:l2_diff}, I find that the two solutions match to machine precision; specifically $L^2\, \text{Diff.} = 3.9265 \times 10^{-16}$.

\begin{remark}
Conventional SBP energy-stability theory cannot predict the stability of the MC semi-discretization in this case, since the matrix $\mat{W}$ is indefinite.  Instead, stability of the MC solution is a consequence of its equivalence to the DG solution, which is well-known to be energy-stable for constant-coefficient advection; see, for example, \cite{Hesthaven2008nodal}.
\end{remark}

\subsection{Nonlinear initial boundary value problem}

\paragraph{IBVP and discretizations}

Next, consider the following two-dimensional version of Burgers' equation:
\begin{equation}\label{eq:burgers}
\begin{alignedat}{2}
    \frac{\partial \fnc{U}}{\partial t} &= -\frac{1}{2} \frac{\partial \fnc{U}^2}{\partial x_1},&\qquad &\forall\, \bm{x} \in \Omega, t \in [0,1], \\
    \fnc{U}(\bm{x},0) &= \bm{G}(\bm{x},0), &\qquad&\forall\, \bm{x} \in \Omega,
\end{alignedat}
\end{equation}
where $\Omega = [0,2\pi]^2$ is a square, periodic domain.  The initial condition is specified using the exact solution, $\fnc{G}(\bm{x},t)$, which is the root of the following nonlinear equation:
\begin{equation}
    \fnc{G}(\bm{x},t) - \sin\big(x_1 - t \fnc{G}(\bm{x},t)\big) = 0.
\end{equation}
It is easy to see that the Burgers' IBVP \eqref{eq:burgers} corresponds to the general IBVP~\eqref{eq:hyperbolic} by making the identifications $\fnc{F}_1(\fnc{U}) = \fnc{U}^2/2$, $\fnc{F}_2(\fnc{U}) = 0$, and $\fnc{S}(\fnc{U},t) = 0$.

I discretize the IBVP~\eqref{eq:burgers} on a multi-element mesh consisting of triangular elements.  To generate the mesh, I first create an $N_{1D}\times N_{1D}$ mesh of quadrilaterals and then split each quadrilateral into two triangles; thus, there are $K=2N_{1D}^2$ elements.  Figure~\ref{fig:burgers_mesh} shows an example of a mesh with $N_{1D} = 4$ using degree $P=2$ modal-collocation operators whose quadrature is $2P$ exact.

I investigate two MC-based discretizations for Burgers' equation.  The first, which I refer to as the Standard discretization, satisfies the requirements of Theorem~\ref{thm:equivalence}.  The second discretization is entropy conservative.

\begin{description}
\item[Standard:] The first discretization applies the MC semi-discretization~\eqref{eq:mc} on each element with zero source ($\bm{s} = \bm{0}$).  The $x_1$ and $x_2$ fluxes are given by $\bm{f}_{1} = \frac{1}{2} \bm{u} \circ \bm{u}$ and $\bm{f}_{2} = \bm{0}$, respectively, where $\circ$ denotes the entry-wise (Hadamard) product.  The numerical flux function in the Standard semi-discretization is 
\begin{equation*}
\big[ \bm{f}_{n}^{*}(\mat{R}_{\Gamma} \bm{u}, \bm{u}_\Gamma^{+}) \big]_i
= \frac{1}{2} \bigg( \frac{u_i + u_i^{+}}{2} \bigg)^2,
\qquad\forall\, i=1,2,\ldots,M,
\end{equation*}
where $u_i = \big[ \mat{R}_{\Gamma} \bm{u} \big]_i$ is the projection of the MC solution on the element to face node $i$, and $u_i^+$ is the projection of the solution to node $i$ from the appropriate neighboring element.

\item[Entropy Conservative:] The second discretization corresponds to a skew-symmetric split-formulation of Burgers' equation that is entropy conservative.  The discretization is given by Equation (8) in Crean et al.~\cite{Crean2018entropy}, and it uses the entropy conservative flux 
\begin{equation*}
    \fnc{F}_{x_1}^*(u_L, u_R) = \frac{u_L^2 + u_L u_R + u_R^2}{6}.
\end{equation*}
The discretization is entropy conservative, which means that the $L^2$ norm of the numerical solution is time invariant:
\begin{equation*}
\frac{d}{dt} \sum_{k=1}^{K} \bm{u}_k^T \mat{W}_k \bm{u}_k = 0,
\end{equation*}
where the subscript $k$ denotes the element index.
\end{description}
The results will demonstrate that the entropy-conservative discretization is not equivalent to modal DG, because it cannot be written in the form \eqref{eq:mc}; nevertheless, I will show how to recover equivalence, if it is desired.  

Figure~\ref{fig:burgers_solution} illustrates the initial and final solution using the Entropy-Conservative discretization with $N_1 = 16$ and elements with $P=2$ operators.  The numerical solution is evaluated over $y=\pi$ by projecting to the faces that coincide with the midline.  Since the numerical solution is multi-valued on the faces, the MC solution shown in Figure~\ref{fig:burgers_solution} corresponds to the simple average from adjacent elements.  Note that the solution using the Standard discretization is qualitatively similar.

\begin{figure}[tbp]
\begin{minipage}{.47\textwidth}
  \centering
    \includegraphics[width=\textwidth]{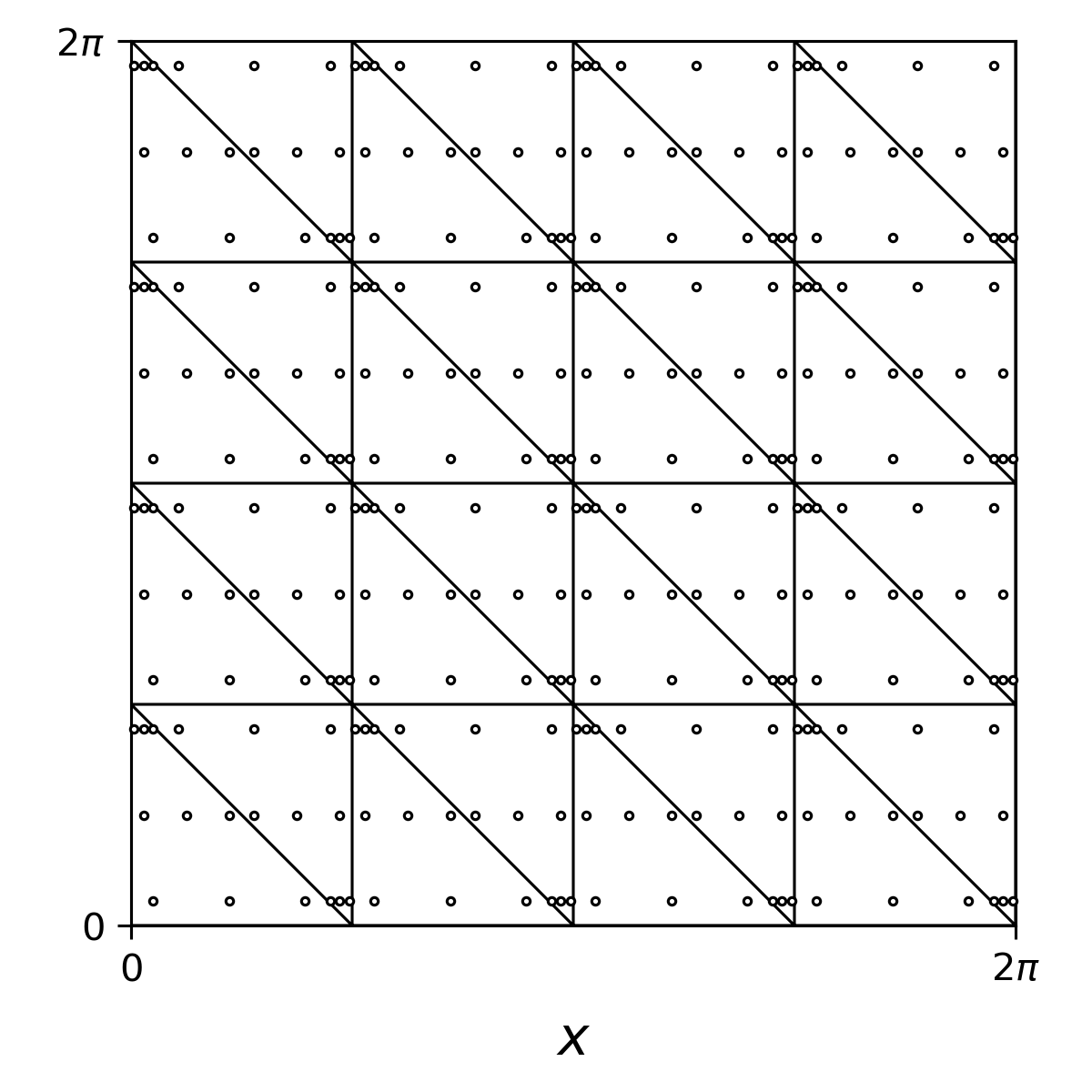}%
    \caption{Example mesh used for the Burgers' IBVP.\\ \label{fig:burgers_mesh}}
\end{minipage}\hfill%
\begin{minipage}{.47\textwidth}
    \centering
    \includegraphics[width=\textwidth]{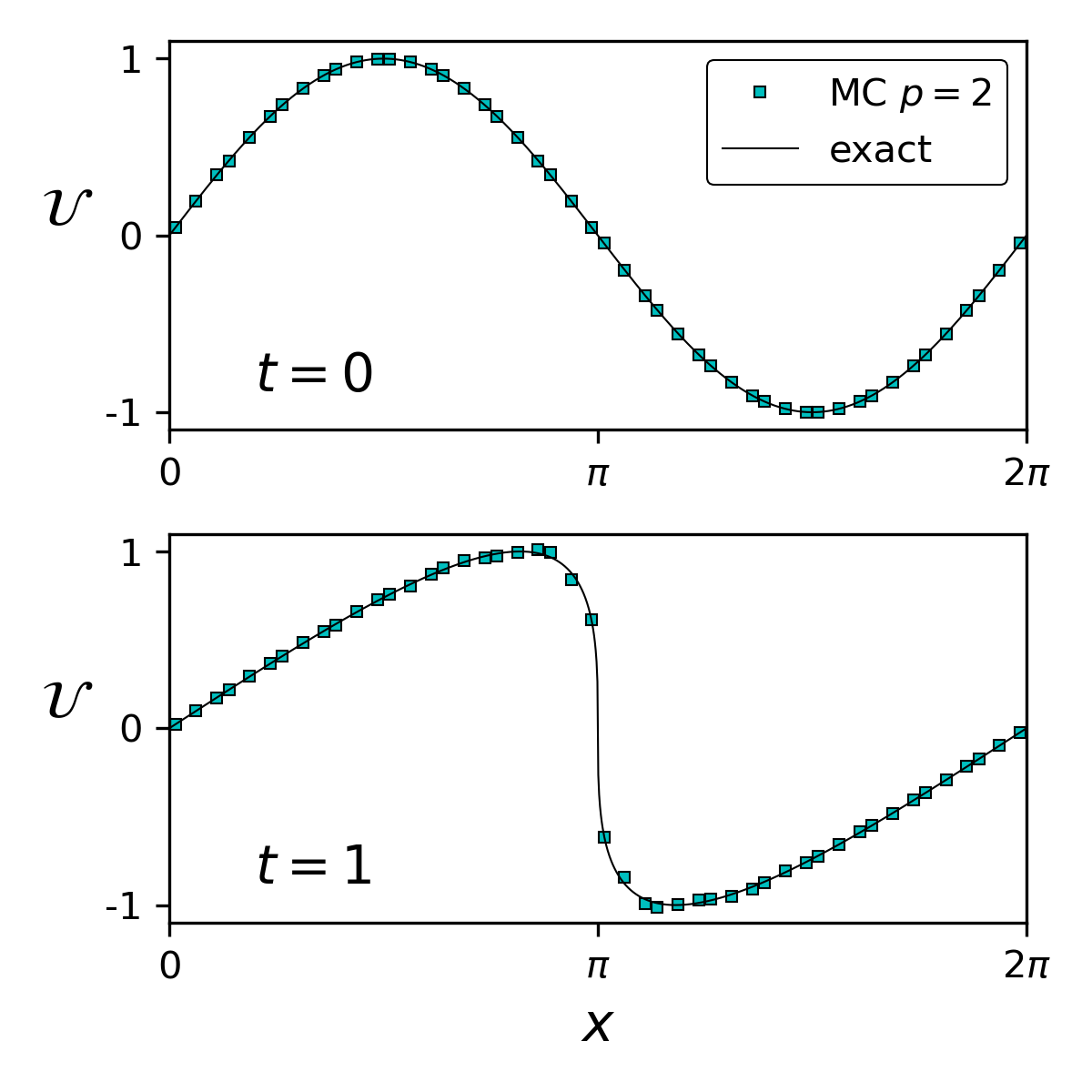}%
    \caption{Initial (top) and terminal (bottom) solutions for the Burgers' IBVP along $y=\pi$.\label{fig:burgers_solution}}
\end{minipage}
\end{figure}

\paragraph{Verification of accuracy}

Prior to investigating MC-DG equivalence for the Burgers' discretizations, I first verify the accuracy of the solutions produced by the two semi-discretizations.  I again use the low-storage method of Carpenter and Kennedy~\cite{Carpenter1994fourth} to advance the solution in time.  The time-marching scheme uses a fixed step size of
\begin{equation*}
    \Delta t = \frac{\text{CFL} \, h}{(P+1)^2 \fnc{G}_{\max}/2}
\end{equation*}
with a nominal CFL number of $1/2$, where $h = 2\pi/N_{1D}$ is the element size and $\fnc{G}_{\max} \equiv \max_{\bm{x}} \fnc{G}(\bm{x},0)$ is the maximum initial velocity.

Figure~\ref{fig:burgers_accuracy} plots the $L^2$ error for the Standard (Std) and Entropy-Conservative (EC) discretizations applied to the Burgers' equation.  The error is computed using the $L^2$ difference between the discrete and exact solutions:
\begin{equation}\label{eq:l2_error}
    L^2\; \text{Error} = \sqrt{\sum_{k=1}^K (\bm{u}_k - \bm{g}_k)^T \, \mat{W} \, \mat{J}_k \, (\bm{u}_k - \bm{g}_k)},
\end{equation}
where $\bm{g}_k$ denotes the exact solution at the nodes $\bm{x}_k$ of element $k$, and $\mat{J}_k$ is a diagonal matrix holding the determinant of the mapping Jacobian on element $k$ along its diagonal.  The figure includes mesh resolutions corresponding to $N_{1D} \in \{ 4, 8, 16, 32, 64\}$ and polynomial degrees $P \in \{1, 2, 3, 4\}$.  The results confirm that both discretizations produce high-order accurate solutions.

Table~\ref{tab:burgers_rates} lists the empirical rates of convergence for the schemes.  The rates are computed from the errors on the finest two mesh sizes, that is, $N_{1D} = 32$ and $N_{1D} = 64$.  Except for the odd degree Entropy-Conservative schemes, the errors converge at rates higher than $P+1$.  Note that both the Standard and Entropy-Conservative semi-discretizations do not include any numerical dissipation from numerical flux functions on the faces.


\begin{figure}[tbp]
\begin{minipage}{.47\textwidth}
  \centering
  \includegraphics[width=\textwidth]{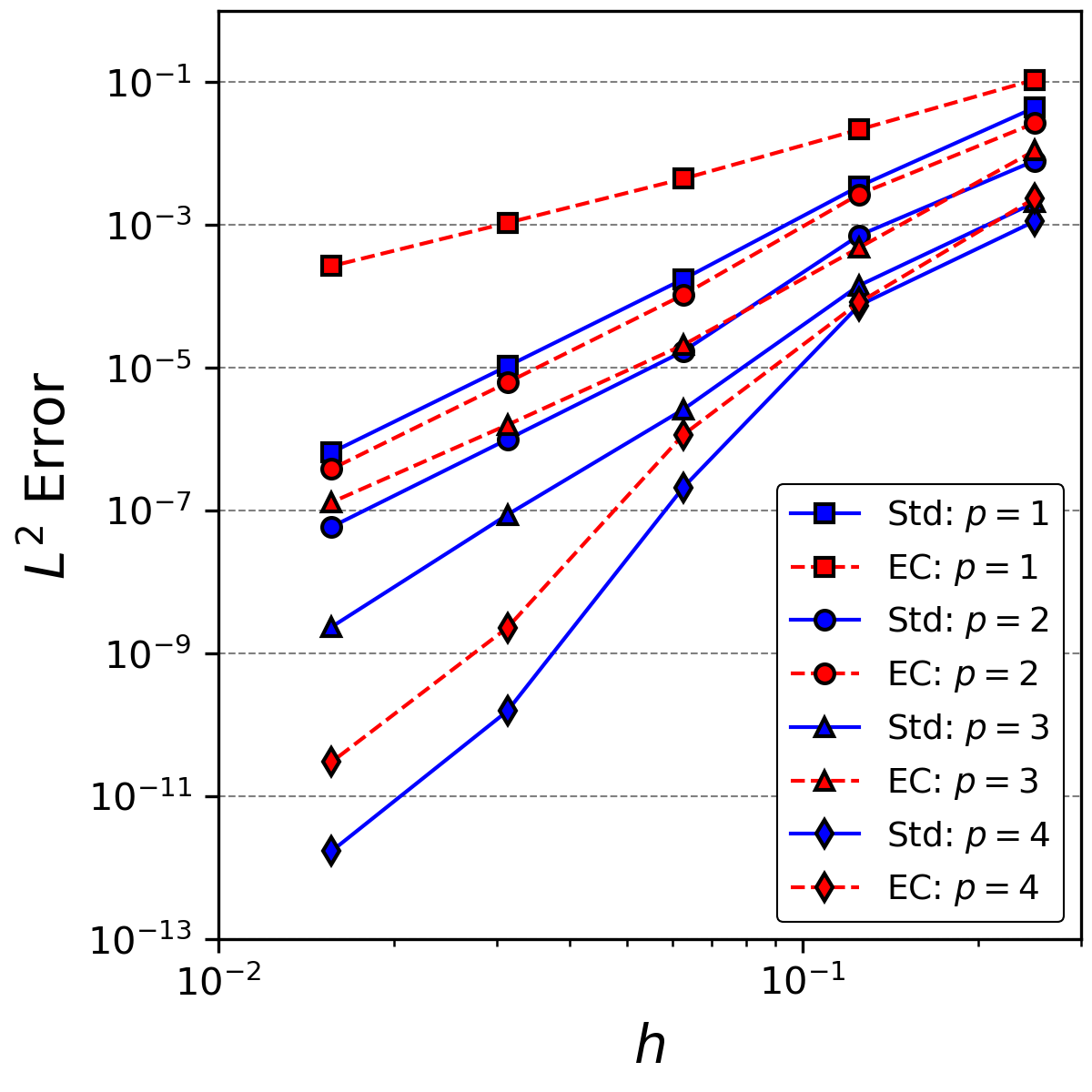}%
    \caption{$L^2$ errors as a function of element size $h = 1/N_{1D}$ for the Standard (Std) and Entropy Conservative (EC) discretizations of the Burgers equation.\label{fig:burgers_accuracy}}
\end{minipage}\hfill%
\begin{minipage}{.47\textwidth}
    \centering
    \includegraphics[width=\textwidth]{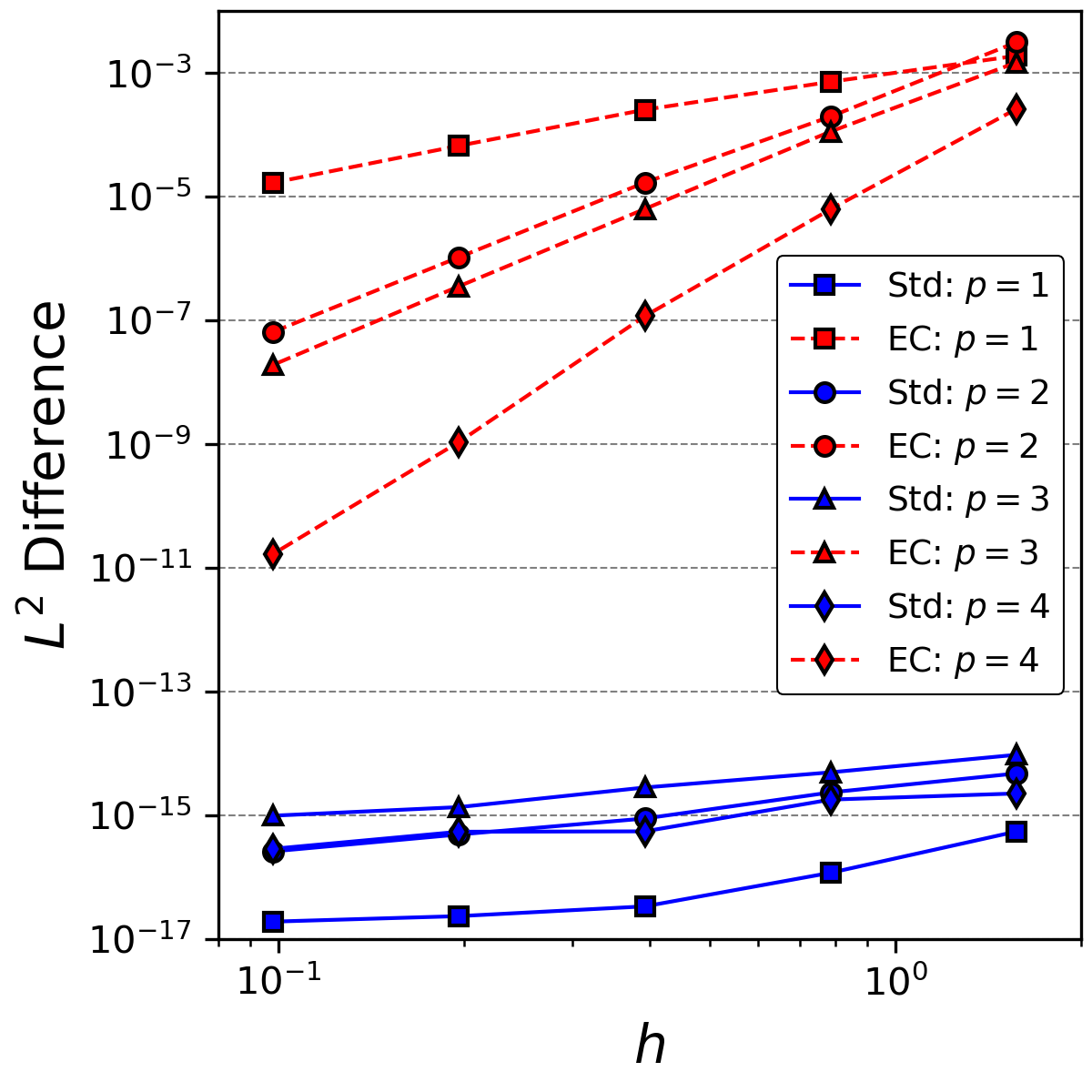}%
    \caption{$L^2$ differences as a function of element size $h = 1/N_{1D}$ between the two MC discretizations and the corresponding modal DG discretization.\label{fig:burgers_compare}}
\end{minipage}
\end{figure}

\begin{table}[tbp]
\centering 
\caption{Empirical rates of convergence for the Standard and Entropy Conservative discretizations of the Burgers equation. \label{tab:burgers_rates}}
\begin{tabular}{lllll}
& $P=1$ & $P=2$ & $P=3$ & $P=4$ \\\hline
\rule{0ex}{3ex}%
Standard & 4.011 & 4.069 & 5.221 & 6.519 \\[1ex] 
Entropy Conservative & 2.012 & 4.022 & 3.599 & 6.247 \\[1ex]\hline
\end{tabular}
\end{table}

\paragraph{Investigation of MC-DG equivalence}

In this section I compare the Standard and Entropy-Conservative MC discretizations with their modal DG counterparts.  As with the constant coefficient advection problem, the modal DG discretizations are obtained by substituting $\bm{u}_k(t) = \mat{V} \tilde{\bm{u}}_k(t)$ on each element $k$ in the MC semi-discretizations, and then left multiplying the element-based equations by $\mat{V}^T \mat{W}$.  I use affine transformations of the reference triangle, so the mapping Jacobian is constant over each element; consequently, the element mass matrices for the modal DG discretizations are simply scaled versions of the identity matrix.  The $L^2$ difference is evaluated using \eqref{eq:l2_error} with $\bm{g}_k$ replaced with the modal DG solution.

Figure~\ref{fig:burgers_compare} plots the $L^2$ difference between the Standard MC discretization of Burgers' equation and its corresponding modal DG discretization (blue curves), and the difference between the Entropy-Conservative discretization and its modal DG discretization (red curves). The results confirm that the Standard MC discretization is equivalent to the modal DG discretization, at least to machine precision.  By contrast, the Entropy-Conservative discretization differs from its modal DG discretization, although the difference converges to zero with mesh refinement.

The Entropy-Conservative discretization produces a residual that cannot be represented in the polynomial basis, which is why MC-DG equivalence is lost in this case.  Comparing the EC results in Figures~\ref{fig:burgers_accuracy} and \ref{fig:burgers_compare}, we see that difference between the discretizations is on the order of the error.  Consequently, the loss of MC-DG equivalence will not impact the results significantly.

That said, equivalence can be recovered by projecting the MC element residuals onto the polynomial basis, that is, by left multiplying the spatial part of the Entropy-Conservative discretization by $\mat{V} \mat{V}^T \mat{W}$.  To illustrate this, I ran the MC-DG comparison study with the projected Entropy-Conservative discretization on a fixed grid with $N_{1D} = 8$.  Table \ref{tab:burgers_compare} lists the $L^2$ differences between the MC and modal DG solutions at $t=1$ for $P \in \{1,2,3,4\}$ polynomial degrees.  To demonstrate that the equivalence is independent of quadrature accuracy, the table includes results for quadratures corresponding to $2P$, $4P$ and $6P$ polynomial exactness.  For all $P$ and quadrature combinations, we see that the projected Entropy-Conservative discretization has been rendered equivalent to modal DG.

\begin{table}[tbp]
\centering
\caption{$L^2$ difference between the projected Entropy-Conservative MC solution to the Burgers Equation and the corresponding modal DG solution.\label{tab:burgers_compare}}
\begin{tabular}{clll}
\rule{0ex}{5ex}
 & \multicolumn{3}{c}{\textbf{quadrature exactness}} \\ \cmidrule{2-4}
\textbf{degree $P$} & \multicolumn{1}{c}{$2P$} & \multicolumn{1}{c}{$4P$} & \multicolumn{1}{c}{$6P$} \\ \hline
\rule{0ex}{3ex}%
1 & 2.4023e-16 & 2.1587e-16 & 1.2093e-16 \\[1ex]
2 & 2.2058e-15 & 1.1910e-15 & 9.9250e-16  \\[1ex]
3 & 5.0207e-15 & 1.8121e-15 & 3.5166e-15 \\[1ex]
4 & 1.9155e-15 & 4.3904e-14 & 3.5165e-15 \\[1ex]\hline
\end{tabular}
\end{table}

\section{Summary and discussion}\label{sec:conclude}


I have highlighted some properties of the modal collocation (MC) operator introduced by Chan~\cite{Chan2018discretely}.  I showed that MC and DG semi-discretizations of a nonlinear hyperbolic IBVP produce equivalent solutions when they use the same underlying quadrature.  One consequence of this result is that the spectrum of MC semi-discretizations is effectively identical to the spectrum of modal DG for linear initial-boundary-value problems.



MC operators are nullspace inconsistent, in general.  I argued that this inconsistency is not an issue for unsteady problems, because the trivial modes remain zero.  On the other hand, for steady problems, an MC discretization should include local-projection stabilization to avoid singular Jacobians. 

I presented numerical studies to verify the equivalence between MC and DG discretizations in the case of the constant-coefficient advection equation and Burgers' equation.  Most of the results were based on a collapsed-coordinate tensor-product quadrature for the triangle; however, I also demonstrated MC-DG equivalence for the advection equation when the quadrature has negative weights.  I showed that an entropy-conservative MC discretization of Burgers' equation is not inherently equivalent to DG, because the residual is not spanned by the polynomial basis.  However, if desired, equivalence can be restored by projecting the residual onto the basis.
 

If MC and DG are effectively equivalent, which one should we use?  At this time I do not have a definitive answer.  Like many such questions regarding the use of one method versus another the answer may depend on the application.  For linear problems where the element mass and stiffness matrices can be precomputed, I suspect MC will provide no benefit and may be slower than a DG method.  On the other hand, it may be worth considering MC for nonlinear problems that are solved with explicit time-marching schemes or matrix-free implicit schemes.

Unlike the MC versus DG question, I am more confident answering the following: how should one construct multidimensional SBP operators?  Use modal collocation.

\section*{Declarations}

\paragraph{Funding}

This work was partially supported by the Office of Naval Research [grant number N00014-23-1-2698] with Dr. Mark Spector as the project manager.

\paragraph{Interests}

The author has no relevant financial or non-financial interests to disclose.

\paragraph{Code availability}

The code supporting this study is available upon reasonable request from the corresponding author.

\bibliography{references}

\end{document}